\DeclareMathOperator*{\esssup}{ess\,sup}
\newtheorem{thm}{Theorem}[section]
\newtheorem{prop}[thm]{Proposition}
\newtheorem{lemma}[thm]{Lemma}
\newtheorem{cor}[thm]{Corollary}
\newtheorem{mydef}[thm]{Definition}
\newtheorem{remark}[thm]{Remark}
\newtheorem{claim}[thm]{Claim}
\newtheorem{assumptions}[thm]{Assumptions}
\numberwithin{equation}{section} 
\DeclareMathOperator{\sgn}{sgn}
\newcommand{\p}{{\partial}}
\renewcommand{\d}{\delta}
\newcommand{\R}{{\mathbb{R}}}
\newcommand{\T}{{\mathbb{T}}}
\newcommand{\g}{{\mathsf{g}}}
\newcommand{\D}{\Delta}
\newcommand{\nn}{\nonumber}
\newcommand{\ep}{\varepsilon}
\newcommand{\al}{\alpha}
\let\div\relax
\DeclareMathOperator{\div}{\mathsf{div}}
\def\XXint#1#2#3{{\setbox0=\hbox{$#1{#2#3}{\int}$ }
\vcenter{\hbox{$#2#3$ }}\kern-.6\wd0}}
\def \hal{\frac{1}{2}}
\def\({\left(}
\def\){\right)}
\def \ep{\varepsilon}
\def\nab{\nabla}
\title{Well-posedness of multidimensional nonlocal conservation laws with nonlinear mobility and bounded force}
\author{Antonin Chodron de Courcel\thanks{Laboratoire Alexandre Grothendieck, Institut des Hautes Études Scientifiques, Université
Paris-Saclay, CNRS, 35 Routes de Chartres, 91440 Bures-sur-Yvette, France; Email: decourcel@ihes.fr} 
}
\begin{document}

\maketitle

\begin{abstract}
We establish local-in-time existence and uniqueness results for nonlocal conservation laws with a nonlinear mobility, in several space dimensions, under weak assumptions on the kernel, which is assumed to be bounded and of finite total variation. Contrary to the linear mobility case, solutions may develop shocks in finite time, even when the kernel is smooth. We construct entropy solutions via a vanishing viscosity method, and provide a rate of convergence for this approximation scheme. 
\end{abstract}

\tableofcontents

\section{Introduction}
\subsection{Nonlinear mobilities in nonlocal conservation laws}

Nonlocal conservation laws have been the subject of many recent studies in the mathematical community. These encompass a wide variety of models and mathematical behaviors, so that the term ``nonlocal conservation law'' is actually not so precise in the end. In this article, we are interested in the Cauchy problem of such models when the \emph{mobility} function is nonlinear.

We study multidimensional scalar conservation laws of the form
\begin{equation}\label{eq:PDE}
\begin{cases}
    \p_t u + \div(f(u)\, K\ast u) = 0, & t>0, \, x\in \R^d, \\
    u\vert_{t=0} = u_0 \in L^1\cap L^\infty(\R^d).
\end{cases}
\end{equation}
Above, $d\ge 1$, $f\in C^{1,1}_{loc}(\R)$ and $K\in L^\infty\cap BV (\R^d,\R^d)$. The field $K\ast u$ will be refered to as the \emph{force field}, whereas $f(u)$ will be refered to as the \emph{mobility}. This actually is an abuse of vocabulary, since fluxes are generally written as $\mathsf{j} = u \mu(u) \mathsf{F}$, where the mobility is the function $\mu(u)$ and not $u\times \mu(u)$.

From the modelling point of view, nonlinear mobilities may represent an exclusion rule at the microscopic level: $f(u) = u(1-u)$ \cite{Giacomin1997PhaseSegregationI}. In this case, equations of the form \eqref{eq:PDE} display phase separation phenomena as in the Cahn-Hilliard model. Other nonlinear mobilities may be relevant, such as power laws $f(u) = u^m$ in the context of porous media equations \cite{Vazquez2006Porous, Carrillo2022FastRegularisation, Carrillo2022VortexFormationSuperlinear}. To name a few more applications, these models appear in the context of sedimentation \cite{Betancourt2011NonlocalSedimentation}, structured population dynamics \cite{Perthame2007Transport}, and traffic flow regulation \cite{GoatinPiccoli2024Multiscale}.

From the mathematical point of view, the equation \eqref{eq:PDE} has an hyperbolic flavor when $f$ is nonlinear. For example, when $f(u) = u^m$ and assuming that the profile $u$ varies of an order $1$ in a region of size $\ep \ll 1$, then so will the effective velocity field $u^{m-1}\, K\ast u$. This allows high density regions to move faster than low density ones, and create a shock in finite time. Moreover, the expected stability and regularity properties of entropy solutions highly depend on the kernel $K$, and it is so far not clear in the literature which condition on $K$ allows to derive, for instance, $BV$ and stability estimates.

\subsection{Related works}

As already emphasized, the nonlinear mobility case strongly departs from the linear one. In the latter case, uniqueness of weak solutions holds as soon as the kernel is as singular as $K\in BV(\R^d)$, without any entropy condition needed \cite{Coclite2022BVKernel, Colombo2024MultidimensionalBV}. 

When the mobility is nonlinear, the uniqueness of entropy solutions was proved in \cite{Betancourt2011NonlocalSedimentation}, via a Kr\"uzhkov-type argument based on $L^1$ stability, assuming $K\in C^2$, $d=1$, and $f(u) = u(1-u)^\al$, $\al \ge 1$. More recent works also assume $d=1$ and $K$ relatively smooth (e.g. $K\in C^2(\R)$ \cite{Chiarello2019Stability}). Finally, the stability estimate is sometimes obtained in a formal way, using unjustified integration by parts. We clarify the assumptions needed for the stability argument to work, namely $K\in L^\infty\cap BV(\R^d,\R^d)$ and $d\ge 1$ (no restriction on the dimension). To our knowledge, this is the first time such a result is obtained for these regularity assumptions.

Concerning the existence theory for \eqref{eq:PDE} and related models, a key argument is to provide strong compactness in order to pass to the limit in the nonlinear mobility. When no BV bound can be propagated, such a task may be difficult. For this reason, one can rely on the kinetic formulation of conservation laws \cite{PerthameDalibard2009HyperbolicKS} or show that some quantities involving singular integrals are propagated \cite{Belgacem2013Compactness, Courcel2025RepulsionModel}. Most of the time, though, BV bounds can be propagated, allowing for strong compactness. However, this property is sometimes thought to be tied with the one-dimensional situation. We clarify this point by showing that one can propagate BV bounds as soon as $K\in BV(\R^d,\R^d)$. This includes kernels as singular as Riesz flows, where $K=-\nab\g$, and $\g(x) = 1/(s|x|^s)$, up to (but not including) the Coulomb potential (so that $s<d-2$).

A natural question when building solutions to \eqref{eq:PDE} concerns the discrepancy between the approximation scheme used and the actual solution. This was first obtained by Kuznetsov \cite{Kuznetsov1976accuracy} for classical conservation laws, building on the doubling of variables method from Kr\"uzhkov. Later, the rate $O(\sqrt{\D x})$ (in the discretization of any monotone scheme) obtained by Kuznetsov was shown to be sharp \cite{TangTeng1995Sharpness} for linear conservation laws in dimension one. Nevertheless, this rate can be improved when considering genuinely nonlinear local conservation laws, see e.g. \cite{Wang1999L1Convergence}. More recently, \cite{Aggarwal2024accuracy} proved a $1/2$ rate of convergence for a model that is similar to ours, considering finite volume approximations. Their result assumes $K\in C^2$, and they must use rather involved splitting arguments if they wish to extend their result to the multidimensional case. On the opposite, we derive a $1/2$ rate of convergence for viscous approximations, in a manner that is transparent on the dimension, and for kernels $K\in L^\infty\cap BV$. To our knowledge, this is the first time such a result is obtained. 

We finally signal to the interested reader that a lot of mathematical efforts have recently been devoted to the passage from nonlocal to local models, for which we refer to \cite{Coclite2023GeneralResult, Colombo2023Overview}.

\subsection{Main results}

We summarize our main contributions as follows. 

First, we derive BV bounds under the mere assumption $ K\in BV(\R^d, \R^d)$. We then clarify the assumptions needed to derive an $L^1$ stability estimate and obtain the uniqueness of entropy solutions: we prove this result in any dimension, and for $K\in L^\infty\cap BV$. This includes many types of kernels, in particular anisotropic and non-monotone ones. Such a result is formally obtained as follows: considering two solutions $u,v$ to \eqref{eq:PDE},
\begin{align*}
\p_t |u-v| &= \div F(u,v) - \sgn(u-v) \nab f(v) \cdot K\ast (u-v) \\
&- \sgn (u-v) f(v) \div K\ast (u-v),
\end{align*}
for some $F(u,v)\in L^1$. Integrating in space gives
\begin{align*}
\frac{d}{dt} \| u-v \|_{L^1} &\le \| \nab f(v) \|_{L^1} \| K\ast (u-v) \|_{L^\infty} + \| f(v) \|_{L^\infty} \| \div K\ast (u-v ) \|_{L^1}.
\end{align*}
If $K\in L^\infty$ and $|\div K|(\R^d)<+\infty$, we can close this by a  Gr\"onwall inequality, provided $\nab f(v) \in L^1$. What is expected to hold is only $f(v) \in BV$, but if a uniform bound holds on $\| \nab f(v_\ep) \|_{L^1}$ at the level of the approximation scheme $v_\ep$, we should be able to close this loop.

We see immediately that the term $\sgn(u-v) \nab f(v) \cdot K\ast (u-v)$ is not defined when $v$ is only BV. Instead of giving a meaning to this quantity, we bound it directly at the level of the doubling of variable, using two inequalities \cref{lem:divboundA} and \cref{lem:divboundB}. 

Finally, we prove an $L^1$ rate of convergence of viscous approximations to \eqref{eq:PDE}, based on a combination of the classical argument from \cite{Kuznetsov1976accuracy} and our inequalities \cref{lem:divboundA} and \cref{lem:divboundB}.

\begin{assumptions}
Otherwise stated, we assume that $d\ge 1$, $K\in L^\infty\cap BV(\R^d,\R^d)$, and $f\in C^{1,1}_{loc}(\R)$ for which there exists some $\al >0$ and $C>0$ such that
\begin{align*}
	|f(\xi) |\le C\big( 1+ |\xi|^\al \big).
\end{align*}
\end{assumptions}

\begin{mydef}[Entropy solutions]\label[definition]{def:entropysol}
Let $d\ge 1$ and $T>0$. We say that $u\in L^\infty_{loc}((0,T); L^\infty\cap BV(\R^d))\cap C([0,T), L^1(\R^d) )$ is an entropy solution to \eqref{eq:PDE} if, for all $\eta\in C^{2}(\R)$ convex, we have in the sense of distributions on $(0,T)$
\begin{equation}
\p_t \eta(u) \le -\div(q(u) K \ast u) - (\eta'(u) f(u) - q(u))\div K \ast u,
\end{equation}
where $q' = \eta' f'$.
We say that $u$ is a \emph{global} solution if one can take $T=+\infty$ above, and that $u$ is \emph{maximal} if 
\begin{equation}
\limsup_{t\to T^-} \| u(t)\|_{L^\infty} = +\infty.
\end{equation}

\end{mydef}
\begin{remark}
Since we deal with general interaction kernels (including attractive ones) and mobilities, we cannot rule out the possibility of a finite time blow-up, for which we provide an estimation.
\end{remark}

\begin{mydef}[Vanishing viscosity solutions]\label[definition]{def:viscositysol}
Let $d\ge 1$ and $T>0$. We say that $u \in L^\infty_{loc}((0,T); L^\infty\cap BV(\R^d))\cap C([0,T), L^1(\R^d))$ is a vanishing viscosity solution to \eqref{eq:PDE} if there exists a sequence of positive reals $(\ep_k)_{k\ge 0}$ and a solution $u_k\in C^{1,1}((0,T)\times \R^d)\cap C([0,T), L^1(\R^d) )$ to 
\begin{equation}
\begin{cases}
\p_t u_k + \div(f(u_k) \nab\g\ast u_k) = \ep_k \D u_k, \\
u_k\vert_{t=0} = u_0,
\end{cases}
\end{equation}
such that $(u_k)_k$ converges to $u$ in $L^1(\R^d)$ locally uniformly in time on $[0,T)$, which we denote
\begin{align*}
u_k\xrightarrow[k\to\infty]{} u \quad \text{in } C_{loc}([0,T), L^1(\R^d)).
\end{align*}
\end{mydef}

\begin{thm} Let $d\ge 1$ and $u_0 \in L^\infty \cap BV(\R^d).$ There exists a unique maximal entropy solution to \eqref{eq:PDE}, in the sense of \cref{def:entropysol}. 
\end{thm}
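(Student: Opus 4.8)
The plan is to obtain existence by a vanishing viscosity scheme, uniqueness by a Kruzhkov doubling of variables argument, and maximality by a continuation argument, with \cref{lem:divboundA} and \cref{lem:divboundB} serving as the technical core.

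\medskip
\noindent\textbf{Existence.} For fixed $\ep>0$ I would first solve the parabolic regularization
\begin{equation*}
\p_t u_\ep + \div(f(u_\ep)\,K\ast u_\ep)=\ep\,\D u_\ep,\qquad u_\ep\vert_{t=0}=u_0,
\end{equation*}
locally in time by standard semilinear parabolic theory, using that $K\ast u_\ep$ is a relatively regular nonlocal drift since $K\in L^\infty$. The decisive point is a family of a priori bounds \emph{uniform in} $\ep$: mass conservation gives the $L^1$ bound; a parabolic maximum principle yields, using $\div(K\ast u_\ep)=(\div K)\ast u_\ep$ with $\div K$ a finite measure, the differential inequality $\frac{d}{dt}\|u_\ep\|_{L^\infty}\lesssim (1+\|u_\ep\|_{L^\infty}^{\al})\,|\div K|(\R^d)\,\|u_\ep\|_{L^\infty}$, which stays finite up to some $\ep$-independent time $T$ but may blow up because $f$ grows superlinearly and the kernel may be attractive; and, most importantly, differentiating the equation and tracking the total variation produces a $BV$ estimate whose closure rests precisely on $K\in L^\infty\cap BV$, the terms carrying derivatives of $K$ being absorbed through $(\div K)\ast u_\ep$. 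This last estimate is the novel ingredient and works in any dimension.

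\medskip
\noindent\textbf{Compactness and the entropy inequality.} With uniform $L^\infty$, $L^1$ and $BV$ bounds, together with equicontinuity in time read off the equation, Helly's selection theorem (or an Aubin--Lions argument) produces a subsequence converging strongly in $C_{loc}([0,T),L^1(\R^d))$; strong convergence is exactly what is needed to pass to the limit in the nonlinear mobility $f(u_\ep)$. To certify that the limit is an entropy solution, I would multiply the viscous equation by $\eta'(u_\ep)$ for convex $\eta\in C^2$: using $q'=\eta'f'$ one rewrites the flux term as $\div(q(u_\ep)K\ast u_\ep)+(\eta'(u_\ep)f(u_\ep)-q(u_\ep))\div(K\ast u_\ep)$, while the viscosity contributes $\ep\,\D\eta(u_\ep)-\ep\,\eta''(u_\ep)|\nab u_\ep|^2$. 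Convexity makes the last term nonpositive, and $\ep\,\D\eta(u_\ep)\to 0$ distributionally, so the inequality of \cref{def:entropysol} survives in the limit.

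\medskip
\noindent\textbf{Uniqueness and maximality.} Given two entropy solutions $u,v$, I would run the doubling of variables. The troublesome term is $\sgn(u-v)\,\nab f(v)\cdot K\ast(u-v)$, which is ill-defined when $v$ is merely $BV$. The key move --- and the main obstacle --- is never to form $\nab f(v)$ explicitly: the derivative is kept on the doubling test function in the $y$ variable, and \cref{lem:divboundA} and \cref{lem:divboundB} are invoked to bound the resulting doubled expression by $BV$-norms of the solutions times $\|K\ast(u-v)\|_{L^\infty}$ and $\|\div K\ast(u-v)\|_{L^1}$. Since $\|K\ast(u-v)\|_{L^\infty}\le\|K\|_{L^\infty}\|u-v\|_{L^1}$ and $\|\div K\ast(u-v)\|_{L^1}\le|\div K|(\R^d)\,\|u-v\|_{L^1}$, one reaches $\frac{d}{dt}\|u-v\|_{L^1}\le C\|u-v\|_{L^1}$ and concludes by Grönwall. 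Finally, since the existence time above depends only on $\|u_0\|_{L^\infty}$ (and on $K,f$), the solution extends as long as $\|u(t)\|_{L^\infty}$ stays finite, which yields the stated blow-up criterion and hence the maximal entropy solution.
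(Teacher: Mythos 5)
Your proposal is correct and follows essentially the same route as the paper: vanishing viscosity with $\ep$-uniform $L^\infty$, $L^1$ and $BV$ estimates, strong $L^1$ compactness to pass to the limit in $f(u_\ep)$ and in the entropy inequality, then uniqueness by doubling of variables where the ill-defined term $\sgn(u-v)\,\nab f(v)\cdot K\ast(u-v)$ is never formed but instead controlled through \cref{lem:divboundA} and \cref{lem:divboundB}, closing with Gr\"onwall and a blow-up criterion for maximality. The only cosmetic differences are that the paper obtains the $L^\infty$ bound by $L^p$ energy estimates with $p\to\infty$ rather than a maximum principle, and its $BV$ estimate uses the full variation $|\nab K|(\R^d)$ (not only $\div K$) to absorb the term $f'(u_\ep)\,\frac{\nab u_\ep\otimes\nab u_\ep}{|\nab u_\ep|}:\nab K\ast u_\ep$.
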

\begin{cor} As a consequence, vanishing viscosity solutions are also unique, and they coincide with the notion of entropy solutions.
\end{cor}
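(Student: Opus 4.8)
The plan is to show that every vanishing viscosity solution is an entropy solution in the sense of \cref{def:entropysol}; uniqueness and the identification of the two notions then follow immediately from the main Theorem. First I would fix a convex entropy $\eta\in C^2(\R)$, let $q$ satisfy $q'=\eta' f'$, and derive the viscous entropy inequality at the level of the smooth approximations $u_k$. Multiplying the viscous equation by $\eta'(u_k)$ and using convexity to write $\ep_k\eta'(u_k)\D u_k = \ep_k \D\eta(u_k) - \ep_k \eta''(u_k)|\nab u_k|^2 \le \ep_k \D\eta(u_k)$, together with the identity $\nab q(u_k)=\eta'(u_k)f'(u_k)\nab u_k$, which gives
\begin{align*}
\eta'(u_k)\div(f(u_k) K\ast u_k) = \div(q(u_k) K\ast u_k) + \big(\eta'(u_k) f(u_k) - q(u_k)\big)\div K\ast u_k,
\end{align*}
yields the pointwise inequality
\begin{align*}
\p_t \eta(u_k) \le -\div(q(u_k) K\ast u_k) - \big(\eta'(u_k)f(u_k) - q(u_k)\big)\div K\ast u_k + \ep_k\D\eta(u_k).
\end{align*}
Since $u_k\in C^{1,1}$ this holds classically, hence in the sense of distributions against any $0\le\vp\in C_c^\infty((0,T)\times\R^d)$.

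The second step is to pass to the limit $k\to\infty$ in this weak formulation. The convergence $u_k\to u$ in $C_{loc}([0,T), L^1)$ gives, up to a subsequence, $u_k\to u$ a.e.; combined with the uniform $L^\infty$ bound enjoyed by the viscous approximation on compact time intervals and the continuity of $\eta$, $q$ and $\xi\mapsto \eta'(\xi)f(\xi)-q(\xi)$, dominated convergence yields convergence of these nonlinear functions of $u_k$ in $L^1_{loc}$. For the force field I would use Young's inequality, $\|K\ast(u_k-u)\|_{L^\infty}\le \|K\|_{L^\infty}\|u_k-u\|_{L^1}\to 0$, so that $K\ast u_k\to K\ast u$ in $L^\infty$; this lets me pass to the limit in the divergence term $-\int \nab\vp\cdot q(u_k) K\ast u_k$ and in the time-derivative term. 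The viscous contribution is harmless, since $\ep_k\int \D\vp\,\eta(u_k)\to 0$.

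The one term requiring care, and what I expect to be the main obstacle, is $\big(\eta'(u_k)f(u_k)-q(u_k)\big)\div K\ast u_k$, because $\div K$ is merely a finite measure. Here I would exploit that $K\in BV$ means $|\div K|(\R^d)<\infty$, so convolution against this measure maps $L^1\to L^1$ and $L^\infty\to L^\infty$ continuously: $\div K\ast u_k$ is bounded in $L^\infty$ uniformly in $k$, while $\div K\ast(u_k-u)\to 0$ in $L^1$. Writing $\Phi(\xi)=\eta'(\xi)f(\xi)-q(\xi)$ and splitting the product as $[\Phi(u_k)-\Phi(u)]\,\div K\ast u_k + \Phi(u)\,\div K\ast(u_k-u)$, the first piece vanishes because $\Phi(u_k)\to\Phi(u)$ in $L^1(\supp\vp)$ against a uniformly bounded factor, and the second because $\div K\ast(u_k-u)\to 0$ in $L^1$ against the bounded factor $\vp\,\Phi(u)$. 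This shows that $u$ satisfies the entropy inequality, i.e.\ $u$ is an entropy solution.

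Finally, since $u\in L^\infty_{loc}((0,T);L^\infty\cap BV)\cap C([0,T),L^1)$ by the very definition of a vanishing viscosity solution, the regularity required by \cref{def:entropysol} holds, and the uniqueness of maximal entropy solutions from the main Theorem forces $u$ to coincide with the unique entropy solution. As any two vanishing viscosity solutions are thereby equal to this single entropy solution, both the uniqueness of vanishing viscosity solutions and the identification of the two notions follow at once.
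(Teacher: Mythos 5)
Your limit passage is correct, and it is essentially the paper's own argument for one of the two directions of the statement: the paper shows that a vanishing viscosity solution satisfies the entropy condition by ``proceeding as in the construction of the entropy solution,'' which is exactly the computation you spell out (viscous entropy inequality via $\ep_k\eta'(u_k)\D u_k\le \ep_k\D\eta(u_k)$, convergence of the nonlinear terms from the uniform $L^\infty$ bound and $u_k\to u$ in $C_{loc}([0,T),L^1)$, the force handled by $\|K\ast(u_k-u)\|_{L^\infty}\le\|K\|_{L^\infty}\|u_k-u\|_{L^1}$, and the measure term handled by the $L^1\to L^1$ and $L^\infty\to L^\infty$ mapping properties of convolution with the finite measure $\div K$). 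Combined with the uniqueness of entropy solutions, this correctly yields the uniqueness of vanishing viscosity solutions.

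There is, however, a gap in your final claim that ``the identification of the two notions follows at once.'' What you have proved is one inclusion: every vanishing viscosity solution equals the unique entropy solution. Coincidence of the two notions also requires the converse: the unique entropy solution must itself be a vanishing viscosity solution. Without this, the set of vanishing viscosity solutions could a priori be strictly smaller than the (singleton) set of entropy solutions --- indeed empty, since the statement of the main theorem only asserts existence and uniqueness of an entropy solution, not that it arises as a viscous limit. The paper proves this converse explicitly: given an entropy solution $u$ and an arbitrary sequence $\ep_k\to 0$, it solves \eqref{eq:viscousPDE} for each $k$, extracts (as in the existence proof, via the uniform bounds of \cref{prop:BV} and Aubin--Lions) a subsequence converging to some vanishing viscosity solution $\tilde u$, notes that $\tilde u$ satisfies the entropy condition by the very argument you gave, and concludes $\tilde u=u$ from \cref{prop:L1stability}. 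This step is short given the existence construction, but it is logically necessary for the ``coincide'' part of the corollary, and your proposal omits it.
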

\begin{thm} Let $d\ge 1$ and $u_0 \in L^\infty \cap BV(\R^d).$ Let $u$ be the unique maximal entropy solution to \eqref{eq:PDE} on $[0,T_{max})$, and $u_k$ a viscous approximation. Then, for all $T<T_{max}$, there exists a constant $C_T>0$ depending on $\| u_0 \|_{L^\infty\cap BV}$ such that
\begin{equation}\label{eq:rate}
\sup_{(0,T)}\| u - u_k \|_{L^1}\le C_T \sqrt{\ep_k}.
\end{equation}
\end{thm}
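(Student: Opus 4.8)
The plan is to adapt Kr\"uzhkov's doubling-of-variables method, in the form sharpened by Kuznetsov, to compare the entropy solution $u$ with the smooth viscous approximation $u_k$, viewing $u_k$ as an approximate entropy solution whose defect is governed by the viscosity $\ep_k$. First I would record the two entropy balances. For $u$, \cref{def:entropysol} applied with the Kr\"uzhkov entropy $\eta(\xi)=|\xi-c|$ (so that $q(\xi)=\sgn(\xi-c)(f(\xi)-f(c))$ and $\eta'(\xi)f(\xi)-q(\xi)=\sgn(\xi-c)f(c)$) gives, for every constant $c$,
\[\p_t|u-c|+\div_x\big(\sgn(u-c)(f(u)-f(c))\,K\ast u\big)+\sgn(u-c)f(c)\,\div K\ast u\le 0\]
in the sense of distributions. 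For $u_k$, multiplying the viscous equation by $\eta'(u_k-c)$ and using $\eta'(u_k)\D u_k=\D\eta(u_k)-\eta''(u_k)|\nab u_k|^2$ produces the same identity with an extra source $\ep_k\D_y\eta(u_k-c)$ on the right and a nonpositive term $-\ep_k\eta''(u_k)|\nab u_k|^2\le 0$ that I discard (keeping the upper bound). Thus $u_k$ is an entropy solution up to the viscous source $\ep_k\D_y\eta(u_k-c)$.

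Next I would double the variables. Writing $u=u(t,x)$, $v=u_k(s,y)$, I test the inequality for $u$ at $c=v$ and the one for $u_k$ at $c=u$ against a nonnegative weight $\omega(t,x,s,y)=\theta(t)\,\Phi_\tau(t-s)\,\Psi_\rho(x-y)$, where $\Phi_\tau,\Psi_\rho$ are standard mollifiers at scales $\tau,\rho$ and $\theta$ isolates $\|u(T)-u_k(T)\|_{L^1}$. Adding the two inequalities and integrating, the cancellations $(\p_t+\p_s)\Phi_\tau(t-s)=0$ and $(\nab_x+\nab_y)\Psi_\rho(x-y)=0$ collapse the local part exactly as in the classical argument, leaving: (i) time-boundary terms that approximate $\|u(T)-u_k(T)\|_{L^1}$ up to $O(\rho+\tau)$ (the $t=0$ contribution vanishing since $u(0)=u_k(0)=u_0$); (ii) mollification errors of size $O(\rho+\tau)$, controlled by the uniform spatial $BV$ bound on $u,u_k$ and the $L^1$-in-time modulus of continuity $\|u(t+h)-u(t)\|_{L^1}\lesssim|h|$ read off from the equation and the flux bounds; (iii) the nonlocal coupling terms; and (iv) the viscous source.

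The nonlocal terms are where this proof departs from the classical one, and I expect them to be the main obstacle. After the cancellations, the flux contributions combine into an integral of $\sgn(u-v)(f(u)-f(v))\big[(K\ast u)(t,x)-(K\ast u_k)(s,y)\big]\cdot\nab_x\omega$ together with the zeroth-order pieces involving $\div K\ast u$ and $\div K\ast u_k$. The part of the force difference due to the displacement $(t,x)\mapsto(s,y)$ is $O(\rho+\tau)$, since $\nab(K\ast u_k)=K\ast\nab u_k$ is bounded by $\|K\|_{L^\infty}\|\nab u_k\|_{L^1}$; paired with $|\nab_x\omega|\sim\rho^{-1}$ it yields a harmless $O(1)\int_0^T\|u-u_k\|_{L^1}\,ds$ term once $\tau\lesssim\rho$. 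The genuinely dangerous piece is $K\ast(u-v)$ against $\nab_x\omega$, which naively scales like $\rho^{-1}\|u-u_k\|_{L^1}^2$; this is exactly the term $\sgn(u-v)\nab f(v)\cdot K\ast(u-v)$ from the uniqueness argument, and rather than leaving the derivative on $\omega$ I would invoke \cref{lem:divboundA} and \cref{lem:divboundB} to bound it directly at the doubling level by $\|\nab f(u_k)\|_{L^1}\,\|K\ast(u-u_k)\|_{L^\infty}\lesssim\|\nab u_k\|_{L^1}\,\|u-u_k\|_{L^1}$, a linear, Gr\"onwall-able quantity. The uniform $BV$ bound on $u_k$ furnished by the $BV$ propagation established above is precisely what makes $\nab f(u_k)\in L^1$ available, and keeps the estimate transparent in the dimension; the zeroth-order $\div K$ terms are handled the same way using $\|f\|_{L^\infty}$ and the finiteness of $|\div K|(\R^d)$.

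Finally I would estimate the viscous source. Integrating $\ep_k\D_y\eta(u_k-u)$ against $\omega$ by parts twice in $y$ and using $\int_y\D_y\Psi_\rho(x-y)\,dy=0$ to subtract the value at $y=x$, the integrand is controlled by a second difference of $u_k$, $\big|\eta(u_k(s,y)-u)-\eta(u_k(s,x)-u)\big|\le|u_k(s,y)-u_k(s,x)|$, so the whole contribution is bounded by $\ep_k\int_0^T\|\nab u_k(s)\|_{L^1}\Big(\int|z|\,|\D\Psi_\rho(z)|\,dz\Big)ds\lesssim\ep_k\,T\,\rho^{-1}$. Collecting everything gives, for $\tau\sim\rho$,
\[\|u(T)-u_k(T)\|_{L^1}\le C\Big(\rho+\frac{\ep_k T}{\rho}\Big)+C\int_0^T\|u-u_k\|_{L^1}\,ds.\]
Gr\"onwall absorbs the last term into a factor $e^{CT}$, where $C$ depends only on $\|u_0\|_{L^\infty\cap BV}$ through the bounds propagated on $[0,T]\subset[0,T_{max})$, and optimizing the residual by choosing $\rho\sim\sqrt{\ep_k}$ produces $\sup_{(0,T)}\|u-u_k\|_{L^1}\le C_T\sqrt{\ep_k}$, which is \eqref{eq:rate}. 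The principal difficulty, as flagged, is the uniform and dimension-independent control of the nonlocal coupling term through \cref{lem:divboundA}--\cref{lem:divboundB} together with the propagated $BV$ bound; once this is in place, the optimization over $\rho$ is entirely standard.
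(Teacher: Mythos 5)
Your proposal is correct and follows essentially the same route as the paper: a Kuznetsov-type doubling-of-variables comparison of the entropy solution $u$ with the viscous approximation, with the ill-defined nonlocal coupling term bounded directly at the doubling level via \cref{lem:divboundA} and \cref{lem:divboundB}, the viscous source controlled by $O(\ep_k/\rho)$ through the uniform $BV$ bound, and the final optimization $\rho\sim\sqrt{\ep_k}$. The only deviations are organizational and harmless: the paper packages the comparison into a standalone lemma (\cref{lem:Kuznetsov}) with a defect functional $\D_{\d,\eta}$, and it uses only the uniform $\sqrt{h}$ time modulus \eqref{eq:timecontinuity} for the viscous approximation (your claimed Lipschitz-in-time modulus holds for $u$ but not uniformly in $k$ for $u_k$, which is immaterial since the time-mollification parameter is sent to zero independently).
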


\begin{remark} The $BV$ regularity asked in \cref{def:entropysol} can be derived in the broader setting $K\in BV(\R^d,\R^d)$ (unbounded). This includes kernels of the form $K= - \nab\g$, with $\g$ of Riesz-type $\g(x) \sim_{|x|\to 0} 1/(s|x|^s)$, as far as $s<d-2$. The threshold $s=d-2$ corresponds to the Coulomb potential, which includes the hyperbolic Keller-Segel model \cite{PerthameDalibard2009HyperbolicKS}.
\end{remark}
\begin{remark} The $L^1$ stability result which implies the uniqueness of entropy solutions strongly relies on the assumption $K\in L^\infty(\R^d,\R^d)$. 
\end{remark}
\begin{remark} We do not think that the rate \eqref{eq:rate} is sharp in general. 
\end{remark}

\subsection{Acknowledgements}

The author thanks Beno\^it Perthame for pointing at references already covering some results of the first draft of this article, and acknowledges support from the Fondation CFM, through the Jean-Pierre Aguilar fellowship.

\section{Existence of entropy solutions}

In this section, we construct entropy solutions to \eqref{eq:PDE}, via a vanishing viscosity method. We thus consider 
\begin{equation}\label{eq:viscousPDE}
\begin{cases}
    \p_t u_\ep + \div(f(u_\ep)\, K\ast u_\ep) = \ep\D u_\ep,\\
    u_\ep\vert_{t=0} = u_0 \in L^\infty\cap BV(\R^d).
\end{cases}
\end{equation}
In the first subsection, we show that this approximate problem is locally well-posed. We chose to include this in the paper because we also prove a blowup criterion on the $L^\infty$ norm, together with an estimation of the blowup time. In the second subsection, we derive estimates which are uniform in the viscosity parameter. These estimates include $L^\infty\cap BV$ norms, and a modulus of continuity in time on solutions. We conclude this section by constructing an entropy solution and examining the time-continuity of such solutions.

\subsection{Local well-posedness for the viscous approximation}
We prove the following result.

\begin{prop}[Local well-posedness of \eqref{eq:viscousPDE}]
Let $u_0 \in L^\infty \cap L^1(\R^d).$ There exists a time $T_{max}>0$ and a unique solution $u_\ep\in L^\infty_{loc}((0,T_{max}), L^\infty\cap L^1(\R^d))$ to \eqref{eq:viscousPDE} on $[0,T_{max})$ starting at $u_0$. This solution moreover satisfies the following blow-up criterion: either $T_{max}=+\infty$, or 
\begin{align*}
\limsup_{t\to T_{max}^-} \| u_\ep (t) \|_{L^\infty\cap L^1} = +\infty.
\end{align*} 
\end{prop}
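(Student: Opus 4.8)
The plan is to build $u_\ep$ as a fixed point of the Duhamel formulation associated with the heat semigroup $S_\ep(t):=e^{\ep t\D}$, i.e.\ convolution against the rescaled Gaussian $G_{\ep t}$. Since $S_\ep(t-s)$ commutes with the divergence, a mild solution of \eqref{eq:viscousPDE} satisfies
\begin{equation*}
u_\ep(t)=S_\ep(t)u_0-\int_0^t \nab\cdot\big(S_\ep(t-s)\,[f(u_\ep(s))\,K\ast u_\ep(s)]\big)\,ds,
\end{equation*}
where the divergence has been moved onto the (smooth) semigroup. I would run a contraction argument on a closed ball $B_R$ of radius $R\simeq\|u_0\|_{L^1}+\|u_0\|_{L^\infty}$ inside
\begin{equation*}
X_T:=C([0,T],L^1(\R^d))\cap L^\infty((0,T),L^\infty(\R^d)),\qquad \|u\|_{X_T}:=\sup_{[0,T]}\|u(t)\|_{L^1}+\esssup_{(0,T)}\|u(t)\|_{L^\infty},
\end{equation*}
for $T=T(\ep,R)$ small. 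This space is well adapted to the nonlocal term: since $K\in L^\infty\cap BV(\R^d,\R^d)\subset L^1\cap L^\infty$, Young's inequality gives $K\ast u\in L^1\cap L^\infty$ for every $u\in L^1\cap L^\infty$.

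The two analytic inputs are the Gaussian smoothing estimates and the flux estimates. For the former, for $p\in\{1,\infty\}$ one has $\|S_\ep(t)g\|_{L^p}\le\|g\|_{L^p}$ and, since $\|\nab G_{\ep t}\|_{L^1}=C(\ep t)^{-1/2}$, also $\|\nab\cdot(S_\ep(t)V)\|_{L^p}\le C(\ep t)^{-1/2}\|V\|_{L^p}$ by Young's inequality. For the latter, the growth bound on $f$ together with $\|K\ast u\|_{L^1\cap L^\infty}\le\|K\|_{L^1\cap L^\infty}\|u\|_{L^1\cap L^\infty}$ give
\begin{equation*}
\|f(u)\,K\ast u\|_{L^1\cap L^\infty}\le C\big(1+\|u\|_{L^\infty}^\al\big)\,\|K\|_{L^1\cap L^\infty}\,\|u\|_{L^1\cap L^\infty},
\end{equation*}
while, splitting $f(u)K\ast u-f(v)K\ast v=(f(u)-f(v))\,K\ast u+f(v)\,K\ast(u-v)$ and using that $f$ is Lipschitz on $[-R,R]$,
\begin{equation*}
\|f(u)\,K\ast u-f(v)\,K\ast v\|_{L^1\cap L^\infty}\le C(R)\,\|u-v\|_{L^1\cap L^\infty}.
\end{equation*}

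Combining these, the Duhamel term is controlled by $\int_0^t C(\ep(t-s))^{-1/2}\,ds=2C\sqrt{T/\ep}$, so for $T=T(\ep,R)$ small the map sends $B_R$ into itself and is a contraction there; Banach's fixed point theorem produces a unique mild solution in $B_R$, and standard parabolic bootstrap upgrades it to a classical solution of \eqref{eq:viscousPDE} for $t>0$. The one subtlety I anticipate here is temporal regularity: $S_\ep$ is strongly continuous on $L^1$ but not on $L^\infty$ at $t=0$, which is why I ask only for $C([0,T],L^1)$ together with an $L^\infty$ bound, and use the lower semicontinuity of $\|\cdot\|_{L^\infty}$ under a.e.\ (hence $L^1$) convergence to see that $B_R$ is closed in $X_T$. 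Uniqueness in the whole class $L^\infty_{loc}((0,T_{max}),L^1\cap L^\infty)$ then follows from the weakly singular Gr\"onwall inequality applied to $t\mapsto\|u(t)-v(t)\|_{L^1\cap L^\infty}\le\int_0^tC(R)(\ep(t-s))^{-1/2}\|u(s)-v(s)\|_{L^1\cap L^\infty}\,ds$.

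Finally, the blow-up criterion follows from the standard continuation argument, the key point being that the existence time $T(\ep,R)$ above depends on the datum only through $R\simeq\|u_0\|_{L^1}+\|u_0\|_{L^\infty}$ and stays bounded below as long as $R$ does. Defining $T_{max}$ as the supremum of existence times, suppose $T_{max}<+\infty$ while $M:=\limsup_{t\to T_{max}^-}\|u_\ep(t)\|_{L^\infty\cap L^1}<+\infty$. Then $\|u_\ep(t)\|_{L^1}+\|u_\ep(t)\|_{L^\infty}\le M+1$ on some interval $(T_{max}-\delta,T_{max})$, so restarting the Cauchy problem from a time $t_0$ in that interval with $T_{max}-t_0<T(\ep,2(M+1))$ extends the solution strictly past $T_{max}$, a contradiction. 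Hence $T_{max}<+\infty$ forces $\limsup_{t\to T_{max}^-}\|u_\ep(t)\|_{L^\infty\cap L^1}=+\infty$. I expect the genuine difficulty to be mere bookkeeping: tracking the dependence of $T$ on $(\ep,R)$ precisely enough that the restart closes, and propagating the $L^\infty$-in-time (rather than $C$-in-time) bound consistently through the fixed point and continuation steps.
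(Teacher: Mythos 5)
Your proposal is correct and follows essentially the same route as the paper: a Duhamel/mild formulation with the heat semigroup, the $(\ep(t-s))^{-1/2}$ smoothing estimate, the split $(f(u)-f(v))\,K\ast u+f(v)\,K\ast(u-v)$ for the contraction on a ball $B_R$ in an $L^1\cap L^\infty$-based space, and a standard continuation argument for the blow-up criterion. Your refinements (working in $C([0,T],L^1)\cap L^\infty L^\infty$ to sidestep the failure of strong continuity of the semigroup on $L^\infty$, and the singular Gr\"onwall inequality giving uniqueness in the full class rather than only in $B_R$) are sound but do not change the substance of the argument.
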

\begin{remark} The maximal time of existence $T_{max}$ established in this proposition a priori depends on $\ep$. We will actually show in the next proposition that this is not the case, thanks to the blow-up criterion and estimates that hold uniformly in the viscosity parameter. 
\end{remark}
\begin{remark}
It is outside the scope of this article to go through the regularity of the viscous solution, especially because this is a consequence of classical results from regularity theory. Indeed, since the flux is locally Lipschitz continuous, the solution $u_\ep(t,\cdot)$ must be $C^{1,1}$ for all $t>0$. This implies in particular that $\D u_\ep(t,\cdot) $ is well-defined in $L^\infty$. Note that it also holds $\nab u_\ep \in L^\infty_{loc}([0,T_{max}), L^1(\R^d)).$
\end{remark}
\begin{proof}
Let $T>0$. Consider the Banach space $X := L^\infty([0,T], L^\infty\cap L^1(\R^d))$ equipped with its natural norm, and the map $F:X\to X$ defined by
\begin{equation*}
F(v)(t,x) := e^{\ep t\D}u_0 - \int_0^t e^{\ep(t-s)\D} \div(f(v)\, K\ast v)(s,x)\, ds.
\end{equation*}
Using heat kernel estimates, we have for all $1\le p\le \infty$
\begin{align*}
\|F(v)(t)\|_{L^p} &\le \| u_0 \|_{L^p} + C\int_0^t (\ep(t-s))^{-\hal} \| f(v)\, K\ast v\|_{L^p} \, ds \\
&\le \| u_0 \|_{L^p} + C\sqrt{\frac{t}{\ep}}\esssup_{[0,T]} \|f(v)\|_{L^\infty} \|K\|_{L^1} \esssup_{[0,T]}\| v\|_{L^p}.   
\end{align*}
This implies that $F(v)\in X$, hence $F$ is well-defined as a mapping from $X$ to itself. Furthermore, fixing some $R>0$ such that $\| u_0 \|_{L^1\cap L^\infty}\le R/2 $ and considering $v\in B_R$, one obtains
\begin{align*}
\| F(v) \|_{L^p}(t) &\le \| u_0 \|_{L^p} + C\sqrt{\frac{t}{\ep}} \esssup_{-R, R} |f| \| K\|_{L^1} R \\
&\le \| u_0 \|_{L^p} + C\sqrt{\frac{t}{\ep}} (1+ R^\al) \|K \|_{L^1} R,
\end{align*}
where we have used the control on $f$ at infinity. Therefore, 
\begin{align*}
\| F(v) \|_X \le \frac{R}{2}+ C\sqrt{\frac{T}{\ep}} \| K \|_{L^1} (1+ R^\al) R.
\end{align*}
Therefore, taking $$T \le \frac{\ep}{C^2 \|K \|_{L^1}^2 (1+R^\al)^2},$$
we have $F(v) \in B_R$. Let $v,w\in B_R$. We have for all $1\le p\le \infty$,
\begin{align*}
\| F(v) -F(w)\|_{L^p}(t) &\le \int_0^t  \|e^{\ep(t-s)\D}\div( (f(v)-f(w))\, K\ast v) \|_{L^p} \, ds \\
&+ \int_0^t  \| e^{\ep(t-s)\D}\div (f(w) \, K\ast (v-w) \|_{L^p} \, ds.
\end{align*}
Using again heat kernel estimates, 
\begin{align*}
\| F(v) -F(w)\|_{L^p}(t) &\le C\int_0^t (\ep(t-s))^{-\hal } \| f(v)-f(w)\|_{L^p} \| K\ast v\|_{L^\infty } \, ds\\
&+ C\int_0^t (\ep(t-s))^{-\hal } \| f(w)\|_{L^\infty} \| K\ast (v-w)\|_{L^p}\, ds \\
&\le C \| K\|_{L^1}\| v\|_{X} \esssup_{[0,T]} (|f'(u)| + | f'(v)| ) \int_0^t (\ep(t-s))^{-\hal}\| u-w\|_{L^p}(s) \, ds  \\
&+ C \esssup_{[0,T]} (|f(u)|) \| K\|_{L^1} \int_0^t (\ep(t-s))^{-\hal}\| u-w\|_{L^p}(s) \, ds.
\end{align*}
Taking $p=1,\infty$ gives 
\begin{align*}
\| F(v) - F(w) \|_X \le C \| K \|_{L^1} \big( R \esssup_{I_R} |f'|  + \esssup_{I_R} |f|  \big) \sqrt{\frac{T}{\ep}} \| v-w\|_X,
\end{align*}
where we have defined $I_R := [-2R, 2R]$. Therefore, taking
\begin{equation}
T \le \frac{\ep}{4C^2\|K \|_{L^1}^2 \big( R \esssup_{I_R} |f'|  + \esssup_{I_R} |f|  \big)^2 },
\end{equation}
we obtain
\begin{align*}
\| F(v) - F(w) \|_X \le \hal \| v-w\|_X.
\end{align*}
Therefore, $F$ is a contraction from $B_R$ to itself. This implies in particular that there exists a fixed point $u\in B_R$, hence a solution to \eqref{eq:viscousPDE} on $[0,T_{max}]$, for some $T_{max}>0$ a priori depending on $\ep$. Our computations also provide the following stability estimate: for any two solutions $u,v\in B_R$,
\begin{align*}
 \| u-v\|_{X} \le 2 \| u_0 - v_0 \|_{L^1\cap L^\infty}.
\end{align*}
Finally, we also obtain the following blow-up criterion: considering a solution $u\in B_R$ to \eqref{eq:viscousPDE}, either $T_{max}=+\infty$, or 
\begin{align*}
\limsup_{t\to T_{max}} \|u_\ep(t)\|_{L^1\cap L^\infty} = +\infty.
\end{align*}
\end{proof} 

\subsection{Uniform estimates and existence of entropy solutions}

We prove several estimates on $u_\ep$ which do not depend on $\ep$. In particular, we obtain an estimate for the $L^\infty$ norm, which can be combined with the blow-up criterion established before to show that the maximal time of existence $T_{max}$ does not depend on $\ep$.

We also obtain BV estimates for fairly general kernels, including unbounded ones. More precisely, we only need $K\in BV(\R^d,\R^d)$. This includes any interaction as singular as Riesz flows, where $K = -\nab\g$ and $\g(x) := 1/ (s|x|^s)$, as long as $s<d-2$ (which corresponds to the Coulomb kernel).

\begin{prop}[Uniform estimates]\label[proposition]{prop:BV}
	Let $u_\ep$ be the unique solution to \eqref{eq:viscousPDE}, defined on some $[0,T_{max})$, starting from $u_0\in L^\infty\cap BV(\R^d)$. The following holds:
\begin{itemize}
\item (conservation of mass) for all $t\in [0,T_{max})$, $$\int_{\R^d} u_\ep(t,x)\, dx = \int_{\R^d} u_0(x) \, dx,$$
\item (decrease of the $L^1$ norm) for all $t\in [0,T_{max})$, $$\| u_\ep(t ) \|_{L^1 }\le \| u_0 \|_{L^1},$$
\item (local $L^\infty$ bound) there exists a universal constant $C>0$ and a time $T>0$ satisfying
	\begin{equation*}
	T \sim \frac{1}{C|\div K  |(\R^d) (1+\| u_0 \|_{L^\infty}^\al) },
	\end{equation*}
	such that
	\begin{equation}\label{eq:localLinfty}
		\forall t\in [0, T), \quad \| u_\ep(t)\|_{L^\infty} \le 2 \| u_0 \|_{L^\infty},
	\end{equation}
\item (propagation of the total variation) 
	\begin{equation}\label{eq:localTV}
		\forall t\in [0, T),\quad \int_{\R^d} |\nab u_\ep(t)| \, dx \le  e^{A(t)}TV( u_0),
	\end{equation}	  
	where $A:[0,T)\to \R$ is bounded,
	\item (continuity in time) for all $T < T_{max}$, there is a constant $C_T>0$ depending on $\| u_0\|_{L^\infty\cap BV}$ such that for all $t$ and $h$ satisfying $t+h, t \in [0,T]$, 
	\begin{equation}\label{eq:timecontinuity}
		 \| u_\ep (t+ h) - u_\ep(t) \|_{L^1}\le C_T \sqrt{h}.
	\end{equation}	 
\end{itemize}

\end{prop}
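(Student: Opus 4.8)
The plan is to treat the five estimates in turn: the first two by direct energy-type manipulations, the third by a maximum principle, and the last two as the technical heart of the proposition.

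\emph{Mass conservation and $L^1$ decay.} For the conservation of mass I would integrate the equation in space, using that both $\div(f(u_\ep)K\ast u_\ep)$ and $\ep\D u_\ep$ are divergences of $L^1$ fields (here $f(u_\ep)K\ast u_\ep\in L^1$ because the mobility structure $f(0)=0$ forces $|f(u_\ep)|\lesssim|u_\ep|$, and $\nabla u_\ep\in L^1$), so their integrals vanish. For the $L^1$ decay I multiply by a smooth approximation of $\sgn(u_\ep)$ and integrate: Kato's inequality gives $\ep\int\sgn(u_\ep)\D u_\ep\le0$, while for the flux term, setting $G(\xi)=\int_0^\xi\sgn(s)f'(s)\,ds$ one checks $\int\sgn(u_\ep)\div(f(u_\ep)K\ast u_\ep)=\int(\sgn(u_\ep)f(u_\ep)-G(u_\ep))\,\div K\ast u_\ep$, and the bracket equals $\sgn(u_\ep)f(0)=0$. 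Hence $\frac{d}{dt}\|u_\ep\|_{L^1}\le0$. This is the one place where $f(0)=0$ is essential.

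\emph{Local $L^\infty$ bound.} Rewriting the equation as $\p_t u_\ep=-f'(u_\ep)\nabla u_\ep\cdot(K\ast u_\ep)-f(u_\ep)\,\div K\ast u_\ep+\ep\D u_\ep$, I evaluate it where $u_\ep(t,\cdot)$ attains its spatial maximum (resp. minimum): there $\nabla u_\ep=0$ and $\D u_\ep\le0$ (resp. $\ge0$), so only the reaction term survives. Using $\|\div K\ast u_\ep\|_{L^\infty}\le|\div K|(\R^d)\|u_\ep\|_{L^\infty}$ and the growth bound on $f$, I obtain $\frac{d}{dt}\|u_\ep\|_{L^\infty}\le C|\div K|(\R^d)(1+\|u_\ep\|_{L^\infty}^\al)\|u_\ep\|_{L^\infty}$. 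Comparison with the associated ODE shows $\|u_\ep\|_{L^\infty}$ stays below $2\|u_0\|_{L^\infty}$ up to the doubling time, which is of order $1/(C|\div K|(\R^d)(1+\|u_0\|_{L^\infty}^\al))$, exactly the claimed $T$. Rigor for the $C^{1,1}$ solution follows from parabolic smoothing (making $u_\ep$ smooth for $t>0$) together with the comparison principle.

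\emph{Propagation of the total variation.} This is the main obstacle. Differentiating in $x_j$, the quantity $w_j:=\p_j u_\ep$ solves the linear drift--diffusion equation $\p_t w_j+\div(f'(u_\ep)(K\ast u_\ep)\,w_j)=-\div(f(u_\ep)\p_j(K\ast u_\ep))+\ep\D w_j$. Multiplying by $\sgn(w_j)$, the divergence-form drift integrates to zero and the viscosity is non-positive, leaving $\frac{d}{dt}\|w_j\|_{L^1}\le\int|\div(f(u_\ep)\p_j(K\ast u_\ep))|$, which expands into $f'(u_\ep)\nabla u_\ep\cdot\p_j(K\ast u_\ep)$ and $f(u_\ep)\p_j\div(K\ast u_\ep)$. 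The naive estimate $\|\p_j(K\ast u_\ep)\|_{L^\infty}\le\|K\|_{L^\infty}\|w_j\|_{L^1}$ would make the first term \emph{quadratic} in the total variation and ruin the bound; the key point is instead to put the derivative on the kernel, $\p_j(K\ast u_\ep)=(\p_j K)\ast u_\ep$, yielding the \emph{linear} bound $\|\p_j(K\ast u_\ep)\|_{L^\infty}\le|DK|(\R^d)\|u_\ep\|_{L^\infty}$, where $|DK|(\R^d)<\infty$ is the total variation of $K$; similarly $\p_j\div(K\ast u_\ep)=\div K\ast w_j$ with $\|\div K\ast w_j\|_{L^1}\le|\div K|(\R^d)\|w_j\|_{L^1}$. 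Summing over $j$ and invoking the $L^\infty$ bound gives $\frac{d}{dt}\int|\nabla u_\ep|\le A'(t)\int|\nabla u_\ep|$ with $A'(t)\le C(1+\|u_0\|_{L^\infty}^{\al})(|DK|(\R^d)+|\div K|(\R^d))$ bounded on $[0,T)$, and Gr\"onwall closes the estimate. To reach $t=0$ with $TV(u_0)$ rather than $\int|\nabla u_\ep(0)|$ I would instead run this argument on the difference quotients $u_\ep(\cdot+he_j)-u_\ep(\cdot)$, which solve the same equation by translation invariance of the convolution, and pass to the limit $h\to0$.

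\emph{Time continuity.} Writing $u_\ep(t+h)-u_\ep(t)=\div\int_t^{t+h}(\ep\nabla u_\ep-f(u_\ep)K\ast u_\ep)\,ds$ exhibits the increment as the divergence of an $L^1$ vector field of norm $O(h)$, so $\|u_\ep(t+h)-u_\ep(t)\|_{W^{-1,1}}\le C_T h$; here the control of $\|f(u_\ep)K\ast u_\ep\|_{L^1}$ uses $K\in L^\infty$ together with the $L^1\cap L^\infty$ bounds, and $\ep\|\nabla u_\ep\|_{L^1}$ is controlled by the BV bound. Combined with $|D(u_\ep(t+h)-u_\ep(t))|(\R^d)\le2\sup_{[0,T]}\int|\nabla u_\ep|\le C_T$, the interpolation inequality $\|g\|_{L^1}\lesssim\|g\|_{W^{-1,1}}^{1/2}|Dg|(\R^d)^{1/2}$ (proved by splitting $g=g\ast\rho_\delta+(g-g\ast\rho_\delta)$ and optimizing in $\delta$) delivers the claimed $\sqrt{h}$ rate.
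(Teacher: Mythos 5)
Your proposal is correct and, on the two technically substantive items, follows the same strategy as the paper: the BV bound hinges on exactly the paper's key point (put the derivative on the kernel, so that $\|\nab K\ast u_\ep\|_{L^\infty}\le |\nab K|(\R^d)\|u_\ep\|_{L^\infty}$ and $\|\div K\ast\nab u_\ep\|_{L^1}\le|\div K|(\R^d)\|\nab u_\ep\|_{L^1}$, both finite because $K\in BV$), and the time continuity is verbatim the paper's argument ($W^{-1,1}$ bound of size $O(h)$ from the $L^1$ flux, then interpolation of $L^1$ between $W^{-1,1}$ and $BV$). The differences are worth recording. First, for the $L^\infty$ bound the paper computes $\frac{d}{dt}\int|u_\ep|^p$, interpolates with the nonincreasing $L^1$ norm, and sends $p\to\infty$, whereas you evaluate the equation at spatial extrema; both yield the same differential inequality $\frac{d}{dt}\|u_\ep\|_{L^\infty}\le C|\div K|(\R^d)\|u_\ep\|_{L^\infty}(1+\|u_\ep\|_{L^\infty}^\al)$, and your route is more elementary, at the price of justifying attainment of the supremum and a.e. differentiability of $t\mapsto\|u_\ep(t)\|_{L^\infty}$ (which parabolic regularity does provide); the paper's $L^p$ route avoids any pointwise reasoning. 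Second, for the BV bound the paper works directly with $\frac{\nab u_\ep}{|\nab u_\ep|}\cdot\nab(\cdot)$ and records cancellations, while you differentiate componentwise and use the Kato-type cancellation of the divergence-form drift $\div(f'(u_\ep)(K\ast u_\ep)w_j)$ against $\sgn(w_j)$; these are equivalent, and you additionally address the initial layer (reaching $TV(u_0)$ rather than $\int|\nab u_\ep(0^+)|$), which the paper silently skips — though your phrase that the difference quotients ``solve the same equation'' is loose: it is $u_\ep(\cdot+he_j)$ that solves the same equation, and the resulting stability estimate for the difference has a Grönwall constant involving the very TV norm being estimated, so it closes only as a nonlinear (local-in-time) Grönwall. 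Third, your $L^1$-decay computation makes explicit that the boundary bracket equals $\sgn(u_\ep)f(0)$, so the monotonicity as stated genuinely requires $f(0)=0$ (or a sign condition on that residual term); the paper's one-line sketch glosses over this, and its standing assumptions never impose $f(0)=0$, so you have identified an implicit hypothesis rather than introduced a gap. (Minor quibble: integrability of the flux for mass conservation does not need $f(0)=0$, since $K\in BV(\R^d)\subset L^1$ already gives $K\ast u_\ep\in L^1$.)
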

\begin{remark}
We are considering rather general interactions, in particular attractive ones. This is why the bounds we obtain here cannot be propagated for all positive times. Nevertheless, the proof naturally applies to repulsive kernels, even singular ones, for which the bounds \eqref{eq:localLinfty} and \eqref{eq:localTV} are propagated for all times. More precisely, the proof extends to kernels $K = -\nab\g$ that are of Riesz type $\g(x) \sim_{|x|\to 0} 1/ (s|x|^s)$, as long as $s<d-2$. Note that the threshold $s=d-2$ corresponds to the Coulomb kernel, which includes the hyperbolic Keller-Segel model \cite{PerthameDalibard2009HyperbolicKS}.
\end{remark}
\begin{remark} The continuity estimate \eqref{eq:timecontinuity} will not be sharp as $\ep \to 0$. Indeed, such a $1/2$-H\"older continuity estimate is typical of the regularisation by noise, and we expect a better (Lipschitz) continuity in time in the inviscid limit: this will be obtained in a second time, once the entropy solution has been constructed (see \cref{prop:timecontinuity}).
\end{remark}

\begin{proof}

\textbf{Mass conservation and decrease of the $L^1$ norm.}
The conservation of mass is straightforward. For the $L^1$ norm, we can for example write
\begin{align*}
\int_{\R^d} |u_\ep| \, dx &= \lim_{\d\to 0}\int_{\R^d} \sqrt{\d^2 + u_\ep^2} \, dx,
\end{align*}
differentiate in time, and integrate by parts, to obtain that $\| u_\ep (t) \|_{L^1} \le \| u_0 \|_{L^1}$ for all $t$ on the lifespan of $u_\ep$. 

\bigbreak
\textbf{Local $L^\infty$ bound.}
We now consider $p>1$ and compute
\begin{align*}
\frac{d}{dt}\int_{\R^d} |u_\ep|^p \, dx &= \int_{\R^d} p\sgn (u_\ep )|u_\ep|^{p-1} \big( -\div(f(u_\ep) \, K\ast u_\ep) + \ep\D u_\ep \big)\, dx \\
&= -p\int_{\R^d}  \sgn (u_\ep )|u_\ep|^{p-1}  f'(u_\ep) \nab u_\ep\cdot K \ast u_\ep \, dx - p\int_{\R^d} \sgn (u_\ep) |u_\ep|^{p-1} f(u_\ep) \div K \ast u_\ep \, dx \\
&+ \ep p \int_{\R^d} \sgn (u_\ep) |u_\ep|^{p-1} \D u_\ep \, dx.
\end{align*}
Notice that
\begin{align*}
p\sgn(u_\ep) |u_\ep|^{p-1} \D u_\ep = \D |u_\ep |^p - p(p-1) |u_\ep|^{p-2} |\nab u_\ep|^2,
\end{align*}
so that the viscous term can be discarded due to its sign. We then introduce $F'(u) := p\sgn(u) |u|^{p-1}f'(u)$, so that integrating by parts the first term gives
\begin{align*}
\frac{d}{dt}\int_{\R^d} |u_\ep|^p \, dx &\le \int_{\R^d}\big(F(u_\ep)- p\sgn(u_\ep) |u_\ep|^{p-1} f(u_\ep) \big) \div K \ast u_\ep \, dx.
\end{align*}
Integrating by parts, one obtains for $F(0) = 0$,
\begin{align*}
\forall u>0,\quad F(u) &= \int_0^u F'(\xi) \, d\xi \\
&= p |u|^{p-1} f(u) - \int_0^u p(p-1) |\xi|^{p-2} f(\xi) \, d\xi. 
\end{align*}
On the opposite, 
\begin{align*}
\forall u<0, \quad F(u) &= -p\int_0^u |\xi|^{p-1}f'(\xi) \, d\xi \\
&= -p|u|^{p-1} f(u) 
- \int_0^u p(p-1)  |\xi|^{p-2}f(\xi)\, d\xi .
\end{align*}
Overall,
\begin{align*}
	\frac{d}{dt}\int_{\R^d} |u_\ep|^p \, dx &\le - p(p-1) \int_{\R^d} \int_0^{u_\ep} |\xi|^{p-2} f(\xi)  \, d\xi \, \div K \ast u_\ep \,  dx.
\end{align*}
Thus, one obtains thanks to the control of $f$ at infinity 
\begin{align*}
\frac{d}{dt}\int_{\R^d} |u_\ep|^p \, dx &\le Cp(p-1) |\div K  |(\R^d) \|u_\ep\|_{L^\infty} \int_{\R^d} \bigg(\frac{|u_\ep|^{p-1+\al}}{p-1+\al} + \frac{|u_\ep|^{p-1}}{p-1}\bigg) \, dx \\
&\le Cp |\div K  |(\R^d) \|u_\ep\|_{L^\infty} \bigg( \frac{p-1 }{p-1+\al}\| u_\ep \|_{L^\infty}^\al   + 1\bigg) \| u_\ep \|_{L^{p-1}}^{p-1}.
\end{align*}
Interpolating between the (nonincreasing) $L^1$ norm and $L^p$ gives
\begin{align*}
\frac{d}{dt}\| u_\ep\|_{L^p}^p &\le Cp |\div K |(\R^d) \|u_\ep\|_{L^\infty} \bigg( \frac{p-1 }{p-1+\al}\| u_\ep \|_{L^\infty}^\al   + 1\bigg) \| u_0 \|_{L^1}^\frac{1}{p-1}\| u_\ep\|_{L^p}^\frac{p(p-2)}{p-1}.
\end{align*} 
Therefore,
\begin{align*}
\frac{d}{dt} \|u_\ep\|_{L^p} &= \frac{1}{p}\| u_\ep\|_{L^p}^{1-p} \frac{d}{dt} \| u_\ep\|_{L^p}^p\\
&\le C |\div K  |(\R^d) \|u_\ep\|_{L^\infty} \bigg( \frac{p-1 }{p-1+\al}\| u_\ep \|_{L^\infty}^\al   + 1\bigg) \| u_0 \|_{L^1}^\frac{1}{p-1}\| u_\ep\|_{L^p}^{-\frac{1}{p-1}}.
\end{align*}
Using that $\| u_\ep(s)\|_{L^p} \to \| u_\ep (s)\|_{L^\infty}$ as $p\to\infty$ for all $0\le s \le t$, one obtains
\begin{align*}
\frac{d}{dt}\| u_\ep \|_{L^\infty} \le  C|\div K  |(\R^d) \|u_\ep\|_{L^\infty} \big( \| u_\ep \|_{L^\infty}^\al   + 1\big).
\end{align*}
This gives the local-in-time bound for the $L^\infty$ norm. 

\bigbreak
\textbf{Total variation bound.}
For the total variation norm, we have
\begin{align*}
    \frac{d}{dt}\int_{\R^d}|\nab u_\ep|dx &= \int_{\R^d}\frac{\nab u_\ep}{|\nab u_\ep|} \cdot\nab \div(f(u_\ep)\, K\ast u_\ep ) \, dx+ \ep \int_{\R^d} \frac{\nab u_\ep}{|\nab u_\ep|} \cdot \nab \D u_\ep \, dx.
\end{align*}
For the viscous term, we again have
\begin{align*}
	\frac{\nab u_\ep}{|\nab u_\ep|} \cdot \nab \D u_\ep \le \D |\nab u_\ep|.
\end{align*}
This can thus be discarded, and one obtains
\begin{align*}
    \frac{d}{dt}\int_{\R^d}|\nab u_\ep|dx &\le  \int_{\R^d}\frac{\nab u_\ep}{|\nab u_\ep|} \cdot\nab \div(f(u_\ep)\, K\ast u_\ep ) \, dx \\
    &= \int_{\R^d} \frac{\nab u_\ep}{|\nab u_\ep|} \cdot \nab \big( \nab f(u_\ep)\cdot  K \ast u_\ep + f(u_\ep) \div K  \ast u_\ep \big) \, dx .
\end{align*}
We now develop the following term:
\begin{align*}
   & \nab \big( \nab f(u_\ep)\cdot K \ast u_\ep + f(u_\ep) \div K  \ast u_\ep \big) \\
   &= \nab\big( f'(u_\ep) \nab u_\ep\cdot K \ast u_\ep + f(u_\ep)\div K \ast u_\ep\big) \\
   &= f''(u_\ep) \nab u_\ep \nab u_\ep\cdot K \ast u_\ep + f'(u_\ep) \nab^2 u_\ep \cdot K \ast u_\ep + f'(u_\ep) \nab u_\ep \cdot\nab K \ast u_\ep \\
   &+ \nab f(u_\ep) \div K \ast u_\ep + f(u_\ep) \div K \ast \nab u_\ep.
\end{align*}
Thus,
\begin{align*}
   & \frac{\nab u_\ep}{|\nab u_\ep|} \cdot \nab \big( \nab f(u_\ep)\cdot K \ast u_\ep + f(u_\ep) \div K \ast u_\ep \big) \\
    &= f''(u_\ep)  |\nab u_\ep| \nab u_\ep\cdot K \ast u_\ep + f'(u_\ep) \nab|\nab u_\ep|\cdot K \ast u_\ep  \\
    &+ f'(u_\ep) \frac{\nab u_\ep\otimes \nab u_\ep}{|\nab u_\ep|}: \nab K \ast u_\ep + f'(u_\ep)  |\nab u_\ep| \div K \ast u_\ep + f(u_\ep) \frac{\nab u_\ep\cdot\div K \ast \nab u_\ep }{|\nab u_\ep|}.
\end{align*}
Integrating by parts, we record several cancellations and obtain in the end
\begin{multline*}
    \frac{d}{dt}\int_{\R^d} |\nab u_\ep| \, dx \le \int_{\R^d} f(u_\ep) \frac{\nab u_\ep\cdot\div K \ast \nab u_\ep }{|\nab u_\ep|} \, dx \\
    +  \int_{\R^d} f'(u_\ep) \frac{\nab u_\ep\otimes \nab u_\ep}{|\nab u_\ep|}: \nab K \ast u_\ep\, dx.
\end{multline*}
Using that $\nab K $ is a bounded operator from $L^\infty\to L^\infty$, and that $f,f'$ are locally bounded functions, one obtains
\begin{equation*}
     \frac{d}{dt}\int_{\R^d} |\nab u_\ep| \, dx \le \big(\|f(u_\ep)\|_{L^\infty}|\div K  |(\R^d)+ \|f'(u_\ep)\|_{L^\infty} |\nab K  |(\R^d)\| u_\ep \|_{L^\infty}  \big)\int_{\R^d} |\nab u_\ep| \, dx.
\end{equation*}
We now use the $L^\infty$ bound to obtain the result by Gr\"onwall's lemma.

\bigbreak
\textbf{Time continuity estimate.} We denote the flux as 
\begin{align*}
F_\ep(t,x)  := f(u_\ep ) \, K\ast u_\ep - \ep \nab u_\ep.
\end{align*} 
From what we have obtained so far, $\|F_\ep \|_{L^\infty([0,T], L^1(\R^d) )}\le C$ is uniformly bounded in $\ep>0$. Therefore, introducing a Lipschitz $\varphi$, 
\begin{align*}
\int_{\R^d} \varphi(x) (u_\ep(h,x) - u_0 (x) ) \, dx &= - \int_0^h \int_{\R^d}\varphi(x) \div F_\ep(t,x) \, dxdt \\
&= \int_0^h \int_{\R^d} \nab\varphi(x) \cdot F_\ep(t,x) \, dxdt \\
&\le C \|\nab \varphi \|_{L^\infty} h.
\end{align*}
We obtain $\| u_\ep (h ) - u_0 \|_{W^{-1,1}} \le C h$. We then interpolate between $W^{-1, 1} $ and BV to obtain
\begin{align*}
\| u_\ep (h) - u_0 \|_{L^1}& \le C \| u_\ep(h) - u_0 \|_{W^{-1,  1}}^\hal \| u_\ep (h) - u_0 \|_{W^{1,1}}^\hal \\
&\le C \sqrt{h},
\end{align*}
using the uniform bound on the total variation.

\end{proof}

We can now construct entropy solutions to \eqref{eq:PDE}. Consider a sequence $(\ep_k)_k$ such that $\ep_k \to 0$ and the unique solution $u_k$ to
\begin{align*}
\begin{cases} 
\p_t u_k + \div(f(u_k)\, K\ast u_k ) = \ep_k \D u_k, \\
u_k\vert_{t=0} = u_0\in L^\infty\cap BV(\R^d).
\end{cases}
\end{align*}
Denote $T_{max}>0$ its maximal time of existence. For any $T<T_{max}$, we have the uniform bound $\sup_{t\in (0,T) }\| u_\ep(t) \|_{BV} \le C_T$, and it is not difficult to prove that $\p_t u_\ep$ is uniformly bounded in $L^\infty([0,T], W^{-1, 1}(\R^d) )$. From the Aubin-Lions lemma, we then conclude that there exists some $u\in C([0,T], L^1(\R^d) )$ such that one can extract a subsequence -- still denoted $(u_k)_k$ -- for which $u_k \to u$ in $C([0,T], L^1(\R^d) ) $. This $u$ inherits the mass conservation, decrease of the $L^1$ norm, local $L^\infty$ and BV bounds, and the modulus of continuity in time from $u_k$. Finally, it satisfies the entropy condition. Indeed, starting from the viscous approximation, we have for all $\eta\in C^2$ and convex,
\begin{align*}
\p_t \eta(u_k) =  \eta'(u_k) \div (f(u_k) \, K \ast u_k ) + \ep_k \eta'(u_k) \D u_k.
\end{align*}
Define $q' = \eta' f'$, so that by also noticing $\D \eta (u_k) = \eta ''(u_k) |\nab u_k|^2 + \eta'(u_k) \D u_k$, we have
\begin{align*}
\p_t \eta (u_k) \le \div (q(u_k) \, K \ast u_k) + (\eta'(u_k) f(u_k) - q(u_k) ) \div K \ast u_k + \ep_k \D \eta (u_k).
\end{align*}
Passing to the limit $k\to \infty$ can be done since $f$ is locally Lipschitz, so that $f(u_k) \to f(u) $ in $C([0,T], L^1(\R^d))$. We have therefore constructed an entropy solution to \eqref{eq:PDE} in the sense of \cref{def:entropysol}.

\subsection{Time continuity}

We now return to the question of the time-continuity of entropy solutions.
\begin{prop}\label[proposition]{prop:timecontinuity}
	Let $u$ be an entropy solution to \eqref{eq:PDE}. For all $T>0$ in the lifespan of $u$, there is a constant $C_T>0$ depending on $\| u_0\|_{L^\infty\cap BV}$ such that
	\begin{equation}
	\forall (t,t+h) \in [0,T], \quad \| u(t+ h)  -  u(t) \|_{L^1} \le C_T h.
	\end{equation}
\end{prop}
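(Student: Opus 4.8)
The plan is to exploit that, unlike the viscous approximation, the entropy solution $u$ is a genuine weak solution of \eqref{eq:PDE} carrying \emph{no} diffusive flux: the $\sqrt h$ loss in \eqref{eq:timecontinuity} came precisely from the term $\ep\nab u_\ep$ in $F_\ep$, which vanishes in the inviscid limit. First I would note that taking the linear entropies $\eta(u)=\pm u$ in \cref{def:entropysol} (for which $q=\pm f$ and $\eta'f-q=0$) yields both signs of the entropy inequality, so that $u$ solves
\[
\p_t u + \div\big(f(u)\,K\ast u\big)=0
\]
in $\mathcal D'((0,T)\times\R^d)$. The strategy is then to show that the spatial flux-divergence is, for a.e.\ $t$, a finite measure of total mass bounded uniformly on $[0,T]$, and to upgrade this into a Lipschitz-in-time $L^1$ bound by duality.

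The key estimate I would establish is that for a.e.\ $t\in[0,T]$ the distribution $\div(f(u)\,K\ast u)(t,\cdot)$ is a finite signed measure with $\big|\div(f(u)\,K\ast u)(t,\cdot)\big|(\R^d)\le C_T$, where $C_T$ depends only on $\|u_0\|_{L^\infty\cap BV}$. Writing
\[
\div\big(f(u)\,K\ast u\big)=\nab f(u)\cdot K\ast u+f(u)\,(\div K)\ast u,
\]
I treat the two terms separately. For the first, the BV chain rule gives $\nab f(u)=f'(u)\nab u$, a finite vector measure with $|\nab f(u)|(\R^d)\le\|f'\|_{L^\infty(I_T)}\,TV(u)$, where $I_T$ is an interval containing the range of $u$; since $K\in L^1$ and $u\in L^\infty$, the field $K\ast u$ is bounded (indeed continuous), so the product $\nab f(u)\cdot K\ast u$ is a finite measure of mass $\le\|K\ast u\|_{L^\infty}\|f'\|_{L^\infty(I_T)}\,TV(u)$. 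For the second, $(\div K)\ast u\in L^1$ with $\|(\div K)\ast u\|_{L^1}\le|\div K|(\R^d)\|u\|_{L^1}$, while $f(u)\in L^\infty$, so $f(u)\,(\div K)\ast u\in L^1$ with norm $\le\|f(u)\|_{L^\infty}|\div K|(\R^d)\|u\|_{L^1}$. All the quantities $\|u\|_{L^\infty}$, $TV(u)$, $\|u\|_{L^1}$, $\|K\ast u\|_{L^\infty}$ are controlled on $[0,T]$ by \cref{prop:BV} (inherited by $u$ in the construction), which yields the uniform bound $C_T$.

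Next I would run the duality argument. Fix $\varphi\in C_c^\infty(\R^d)$ with $\|\varphi\|_{L^\infty}\le1$ and set $\Phi(t):=\int_{\R^d}u(t,x)\varphi(x)\,dx$, which is continuous since $u\in C([0,T],L^1)$. The weak formulation gives $\Phi'=\Psi$ in $\mathcal D'(0,T)$, where $\Psi(t)=\int_{\R^d}f(u)\,K\ast u\cdot\nab\varphi\,dx=-\langle\div(f(u)K\ast u)(t,\cdot),\varphi\rangle$. By the previous step $|\Psi(t)|\le\|\varphi\|_{L^\infty}\,C_T\le C_T$ for a.e.\ $t$, so $\Phi$ is Lipschitz and
\[
\left|\int_{\R^d}\big(u(t+h)-u(t)\big)\varphi\,dx\right|=|\Phi(t+h)-\Phi(t)|\le C_T\,h.
\]
Taking the supremum over $\varphi\in C_c^\infty$ with $\|\varphi\|_{L^\infty}\le1$, and using the duality characterization of the $L^1$ norm of the integrable function $u(t+h)-u(t)$, gives $\|u(t+h)-u(t)\|_{L^1}\le C_T\,h$.

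The main obstacle is the first step: making rigorous sense of $\nab f(u)\cdot K\ast u$, which pairs the vector measure $\nab f(u)$ with the merely $L^\infty$ field $K\ast u$, and checking that it is a finite measure with the stated mass. This is where the boundedness of $K$ (so that $K\ast u$ is continuous and bounded), together with the BV chain rule, are essential; it is also the precise point at which the Lipschitz rate improves on the $\sqrt h$ rate of the viscous scheme, since no term of the form $\ep\nab u$ must be controlled.
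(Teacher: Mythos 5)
Your proposal is correct and takes essentially the same route as the paper: both observe that $u$ is a genuine weak solution and that its flux divergence, decomposed as $\div(f(u)\,K\ast u)=\nab f(u)\cdot K\ast u+f(u)\,\div K\ast u$, is for a.e.\ $t$ a finite measure of mass at most $\|f'\|_{L^\infty}\,|Du|(\R^d)\,\|K\ast u\|_{L^\infty}+\|f\|_{L^\infty}\,|\div K|(\R^d)\,\|u\|_{L^1}\le C_T$, whence $\|u(t+h)-u(t)\|_{L^1}\le\int_t^{t+h}|\p_t u(\tau)|(\R^d)\,d\tau\le C_T h$. Your duality step with test functions $\|\varphi\|_{L^\infty}\le 1$, and your justification of the pairing $\nab f(u)\cdot K\ast u$ via the continuity of $K\ast u$, simply make explicit what the paper's terser proof asserts directly.
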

\begin{proof}
	The equation reads
	\begin{align*}
	\p_t u + \div F(u) = 0,
	\end{align*}
	where $F(u) \in L^\infty([0,T], L^\infty\cap BV(\R^d))$, with 
	$$| \div F(u) |(\R^d) \le \|f'\|_{L^\infty(K) } |D u|(\R^d) \| K\|_{L^1} \|u\|_{L^\infty} + \| f\|_{L^\infty(K)} |\div K  |(\R^d)\| u \|_{L^1} =: C_T,  $$
	and $K := [-\| u\|_{L^\infty}, \| u\|_{L^\infty}].$ This implies for all $(t+h, t)\in [0,T]$,
	\begin{align*}
	\| u(t+h) - u(t) \|_{L^1} &\le \int_0^h |\p_t u (\tau) |(\R^d)\, d\tau \\
	&\le C_T h.
\end{align*}	 
\end{proof}

\section{Uniqueness of entropy solutions}

In this section, we prove by an $L^1$ stability result à la Kr\"uzhkov that entropy solutions are unique. We also show that they coincide with vanishing viscosity solutions, and give a rate of convergence for this approximation scheme, à la Kuznetsov. As emphasized in the introduction, and contrary to local conservation laws, we must be careful in the manipulation of the term
\begin{align*}
\sgn( u-v) \nab f(v) \cdot K \ast (u-v) ,
\end{align*}
which does not make sense. For this reason, we will need the following inequalities. 

\subsection{Three lemmas}

In this subsection, we prove two novel ingredients for the proof of the $L^1$ stability and rate of convergence, which are \cref{lem:divboundA} and \cref{lem:divboundB}. We also record \cref{lem:regV}.

\begin{lemma}\label[lemma]{lem:divboundA} Let $d\ge 1$. Consider $V\in L^\infty(\R^d,\R^d)$ such that $\div V\in L^1$, $a,b\in L^\infty\cap  BV(\R^d)$, $f$ locally Lipschitz, and two Lipschitz and compactly supported maps $\varphi, \psi: \R^d\to \R$. We have
\begin{multline*}
\int_{\R^d\times \R^d} \div_y\big[ \psi(x+y) \varphi(x-y)  V(x)\big] \sgn(a(x)- b(y) ) (f \circ a (x) - f\circ b(y) ) \, dxdy  \\
\le\|\psi \|_\infty  \| \varphi\|_{L^1} | D b |(\R^d) \| V\|_{L^\infty} \| f'\|_{L^\infty([-2\| b\|_{L^\infty},2 \| b\|_{L^\infty}])} .
\end{multline*}
\end{lemma}

\begin{lemma}\label[lemma]{lem:divboundB} Let $d\ge 1$. Consider $V:\R^d\to \R^d$ globally Lipschitz, $a,b\in L^\infty \cap BV(\R^d)$, $f$ locally Lipschitz, and $\psi,\varphi \in C^{0,1}_c$. There exists some $R>0$ depending on $\| b\|_{L^\infty}$ such that
\begin{multline*}
\int_{\R^d\times \R^d} \div_y \big[\psi(x+y) \varphi_\ep(x-y) \big( V(x) - V(y) \big)  \big] \sgn(a(x)- b(y) ) (f \circ a (x) - f\circ b(y) ) \, dxdy  \\
\le \| \psi \|_{\infty}\| f'\|_{L^\infty([-2\| b\|_{L^\infty},2 \| b\|_{L^\infty}])} | D b |(\R^d) \|\nab V\|_{L^\infty} M_1(\varphi),
\end{multline*}
where
\begin{align*}
M_1(\varphi) := \int_{\R^d} |x\, \varphi(x) |\, dx.
\end{align*}
\end{lemma}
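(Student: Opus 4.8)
The plan is to integrate by parts in the $y$ variable, transferring the divergence onto the factor $\sgn(a(x)-b(y))(f(a(x))-f(b(y)))$, and then to exploit the Lipschitz character of $V$ to convert the difference $V(x)-V(y)$ into a factor $\|\nab V\|_{L^\infty}|x-y|$. This factor combines with $\varphi(x-y)$ and, after the change of variables $z=x-y$, produces exactly the first moment $M_1(\varphi)$. Throughout, write $\Phi(s,t) := \sgn(s-t)(f(s)-f(t))$ and abbreviate, for fixed $x$, the Lipschitz and compactly supported vector field $W_x(y) := \psi(x+y)\varphi(x-y)(V(x)-V(y))$, and call $I$ the integral to be estimated:
\[
I := \int_{\R^d\times\R^d}\div_y\big[W_x(y)\big]\,\Phi(a(x),b(y))\,dx\,dy .
\]

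First I would integrate by parts in $y$. Since $\psi,\varphi\in C^{0,1}_c$, the field $y\mapsto W_x(y)$ is Lipschitz with compact support, so there is no boundary term, and the Gauss--Green pairing of a Lipschitz compactly supported field against the $BV$ function $y\mapsto\Phi(a(x),b(y))$ (justified by mollifying $W_x$) gives
\[
I = -\int_{\R^d\times\R^d} W_x(y)\cdot d\big(D_y\,\Phi(a(x),b(\cdot))\big)(y)\,dx .
\]
The crucial observation is that, for each fixed $x$, the map $\xi\mapsto \sgn(a(x)-\xi)(f(a(x))-f(\xi))$ is \emph{Lipschitz}, with a.e. derivative $-\sgn(a(x)-\xi)f'(\xi)$: the jump of the sign at $\xi=a(x)$ is harmless because $f(a(x))-f(\xi)$ vanishes there. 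Hence $y\mapsto\Phi(a(x),b(y))$ is $BV$, and the chain rule for $BV$ functions (Vol'pert / Ambrosio--Dal Maso) yields
\[
D_y\,\Phi(a(x),b(\cdot)) = -\sgn(a(x)-b(y))\,\widetilde{f'}(b(y))\,D_y b ,
\]
where $\widetilde{f'}$ equals $f'$ on the absolutely continuous part and is a bounded average of $f'$ on the jump set of $b$; in every case $|\widetilde{f'}|\le \|f'\|_{L^\infty([-\|b\|_{L^\infty},\|b\|_{L^\infty}])}$.

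Substituting and taking absolute values, with $\|\psi(x+\cdot)\|_{L^\infty}\le\|\psi\|_{L^\infty}$ and $R:=2\|b\|_{L^\infty}$ (one may take any $R\ge\|b\|_{L^\infty}$),
\[
|I| \le \|\psi\|_{L^\infty}\,\|f'\|_{L^\infty([-R,R])}\int_{\R^d\times\R^d}|\varphi(x-y)|\,|V(x)-V(y)|\,dx\,d|D_y b|(y) .
\]
The global Lipschitz bound $|V(x)-V(y)|\le\|\nab V\|_{L^\infty}|x-y|$ bounds the integrand by $\|\nab V\|_{L^\infty}\,|x-y|\,|\varphi(x-y)|$; integrating in $x$ first (Tonelli, the integrand being nonnegative) via $z=x-y$ gives $\int_{\R^d}|z|\,|\varphi(z)|\,dz = M_1(\varphi)$, independent of $y$, and the remaining mass $\int d|D_y b| = |Db|(\R^d)$ produces precisely the claimed estimate.

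The hard part is the rigorous $BV$ chain rule when $b$ is merely of bounded variation: the naive pointwise chain rule fails on the jump set of $b$, so one must control that contribution. The saving feature is that the effective multiplier $\widetilde{f'}(b(y))$ is dominated in modulus by $\|f'\|_{L^\infty([-\|b\|_{L^\infty},\|b\|_{L^\infty}])}$ regardless of the jump structure; equivalently, one may approximate $b$ in the strict $BV$ topology by smooth functions, apply the classical chain rule, and pass to the limit using lower semicontinuity of the total variation together with the uniform bounds above. All remaining steps are Fubini and elementary estimates.
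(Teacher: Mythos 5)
Your proof is correct and follows essentially the same route as the paper: integrate by parts in $y$, observe that the $y$-derivative of $\sgn(a(x)-b(y))\,(f(a(x))-f(b(y)))$ is $-\sgn(a(x)-b(y))\,f'(b(y))\,\nabla b(y)$ a.e. (the sign jump being harmless since $f(a(x))-f(b(y))$ vanishes there), then use $|V(x)-V(y)|\le \|\nabla V\|_{L^\infty}|x-y|$ and the change of variables $z=x-y$ to produce $M_1(\varphi)\,|Db|(\R^d)$. The only divergence is in handling general $b\in L^\infty\cap BV$: the paper first assumes $b\in C^1_c$ and then passes to the limit along smooth approximations $b_k\to b$ in $L^1$ with $\|\nabla b_k\|_{L^1}\to |Db|(\R^d)$, using dominated convergence on the left-hand side, whereas your primary route invokes the Vol'pert/Ambrosio--Dal Maso chain rule directly on the $BV$ composition (with the jump-part multiplier correctly bounded by the Lipschitz constant of $\xi\mapsto\sgn(a(x)-\xi)(f(a(x))-f(\xi))$), while you also note the paper's approximation argument as an equivalent alternative---both are valid.
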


\begin{proof}[Proof of \cref{lem:divboundA}]
We first assume $b\in C_c^1$. As before, the map 
\begin{align*}
	y\mapsto \sgn(a(x) - b(y) )( f \circ a(x) - f\circ b(y) ) 
	\end{align*}
	is a.e. differentiable, for a.e. $x\in \R^d$, and its derivative is 
	\begin{align*}
		y\mapsto - \sgn( a(x) - b(y) ) f'\circ b(y) \nab b(y) .
	\end{align*}
    Integrating by parts,
    \begin{align*}
   & \int_{\R^d\times \R^d} \div_y\big[ \psi(x+y) \varphi_\ep(x-y)  V(x)\big] \sgn(a(x)- b(y) ) (f \circ a (x) - f\circ b(y) ) \, dxdy \\
   &= \int_{\R^d\times \R^d} \psi(x+y) \varphi_\ep(x-y) V(x) \cdot \nab b (y) \sgn( a(x) - b(y) ) f'(b(y) ) \, dxdy \\
   &\le \|\psi\|_{\infty} \| V \|_{L^\infty} \| f'\circ b \|_{L^\infty} \int_{\R^d} |\nab b(y) | |\varphi_\ep(x - y ) | \, dxdy.
    \end{align*}
    Changing the variable in the last integral gives the bound
    \begin{align*}
    \|\psi\|_{\infty} \| V \|_{L^\infty} \| f'\circ b \|_{L^\infty} \| \varphi \|_{L^1} \| \nab b \|_{L^1}.
    \end{align*}
    Now consider that $b\in L^\infty \cap BV(\R^d)$. Therefore, there exists a sequence of smooth and compactly supported functions $(b_k)_k$ such that $b_k \to b$ in $L^1$, $\| b \|_{L^\infty}\le \sup_k \| b_k \|_{L^\infty} \le 2 \| b\|_{L^\infty} =: M$, and
	\begin{align*}
	|	 Db |(\R^d)= \lim_{k\to\infty} \|\nab b_k \|_{L^1}.
	\end{align*}
	Since $f\in C^{0,1}_{loc}$, we have that 
	\begin{align*}
	\| f'\circ b_k \|_{L^\infty} &\le \esssup_{[-\|b_k\|_{L^\infty}, \|b_k \|_{L^\infty}]} |f'| \\
	&\le \esssup_{[-M, M]} |f'| <+\infty.
	\end{align*}
	Concerning the left-hand side, the integrand is bounded -- up to taking $M$ bigger -- by
	\begin{align*}
	2|\div_y \big[\psi(x+y) \varphi_\ep(x-y) \big( V(x) - V(y) \big)  \big]  |\sup_{[-M, M]} |f|,
	\end{align*}
	which is integrable, and independent of $k$. By dominated convergence, one obtains the result. 
\end{proof}
\begin{proof}[Proof of \cref{lem:divboundB}]
	We first assume that $b\in C^1_c$. Therefore, the map 
	\begin{align*}
	y\mapsto \sgn(a(x) - b(y) )( f \circ a(x) - f\circ b(y) ) 
	\end{align*}
	is a.e. differentiable, for a.e. $x\in \R^d$, and its derivative is 
	\begin{align*}
		y\mapsto - \sgn( a(x) - b(y) ) f'\circ b(y) \nab b(y) .
	\end{align*}
    Integrating by parts,
   \begin{align*}
   & \int_{\R^d\times \R^d} \div_y \big[\psi(x+y) \varphi_\ep(x-y) \big( V(x) - V(y) \big)  \big] \sgn(a(x)- b(y) ) (f \circ a (x) - f\circ b(y) ) \, dxdy  \\
   &=  \int_{\R^d\times \R^d} \psi(x+y) \varphi_\ep (x-y) (V(x) - V(y) ) \cdot  \nab b(y)  \sgn(a(x) - b(y) )  f'(b(y) ) \, dxdy \\
   &\le \|\psi \|_{\infty} \| f'\circ b \|_{L^\infty } \int_{\R^d} dy\, |\nab b(y) |\, \int_{\R^d} dx\, | V(x)-V(y) | |\varphi_\ep(x-y)|.
   \end{align*}
   We then use the Lipschitz bound on $V$ to conclude, after a change of variable in the inner integral. We obtain the bound
   \begin{align*}
   	&\le  \|\psi \|_{\infty} \| f'\circ b \|_{L^\infty} \|\nab b \|_{L^1} \| \nab V \|_{L^\infty} M_1(\varphi).
   \end{align*}
	We treat the case $b\in L^\infty \cap BV$ as for the proof of \cref{lem:divboundA}.
\end{proof}

We finally record the following claim, whose proof is left to the reader.
\begin{claim}\label[claim]{lem:regV} Let $d\ge 1$, $K \in L^\infty(\R^d)$ and $u\in L^1(\R^d)$. Then, the field $V := K \ast u$ is bounded and uniformly continuous. If moreover $u\in BV(\R^d)$, then $V$ is globally Lipschitz continuous.
\end{claim}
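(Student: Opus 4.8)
The plan is to reduce both assertions to the $L^1$ modulus of continuity of $u$, exploiting the fact that convolution lets us move the translation off of the (merely bounded) kernel $K$ and onto the function $u$, which is either in $L^1$ or in $BV$.

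Boundedness is immediate from Young's inequality: for a.e.\ $x$,
\[
|V(x)| = \Big| \int_{\R^d} K(x-y)\, u(y)\, dy \Big| \le \|K\|_{L^\infty}\, \|u\|_{L^1},
\]
so $\|V\|_{L^\infty} = \|K\ast u\|_{L^\infty} \le \|K\|_{L^\infty}\|u\|_{L^1} < +\infty$. For uniform continuity, I would perform the change of variable $z = x-y$ to write $V(x) = \int_{\R^d} K(z)\, u(x-z)\, dz$, whence
\[
V(x+h) - V(x) = \int_{\R^d} K(z)\,\big( u(x+h-z) - u(x-z) \big)\, dz.
\]
Bounding the kernel in $L^\infty$ and changing variables once more ($w = x-z$) in the resulting integral gives
\[
\| V(\cdot + h) - V \|_{L^\infty} \le \|K\|_{L^\infty}\, \| u(\cdot + h) - u \|_{L^1}.
\]
Since the right-hand side does not depend on $x$, and since translations are continuous in $L^1$ (so that $\| u(\cdot+h) - u\|_{L^1} \to 0$ as $h\to 0$), the field $V$ is uniformly continuous.

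For the Lipschitz statement, I would invoke the standard characterization of $BV$ functions through their translation modulus, namely $\| u(\cdot + h) - u \|_{L^1} \le |Du|(\R^d)\,|h|$. This is obtained for smooth compactly supported $u$ by writing $u(x+h) - u(x) = \int_0^1 \nab u(x+th)\cdot h\, dt$ and integrating in $x$, then extended to $u\in L^\infty\cap BV$ by the same mollification/approximation procedure already used in the proofs of \cref{lem:divboundA} and \cref{lem:divboundB}. Feeding this into the previous display yields
\[
\| V(\cdot + h) - V \|_{L^\infty} \le \|K\|_{L^\infty}\, |Du|(\R^d)\, |h|,
\]
so that $V$ is globally Lipschitz with constant $\|K\|_{L^\infty}\, |Du|(\R^d)$.

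There is no substantial obstacle here; the single point worth flagging is that one must \emph{not} try to estimate the increment $K(x+h-y) - K(x-y)$ directly, as $K$ carries no continuity whatsoever. Transferring the increment onto $u$ via the convolution structure is exactly what makes the argument go through, and it is this maneuver that distributes the regularity of $u$ (continuity of $L^1$ translations, or the $BV$ bound) onto $V$.
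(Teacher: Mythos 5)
Your proof is correct, and it is precisely the standard argument the paper has in mind: the paper records this statement as a claim ``whose proof is left to the reader,'' so there is no written proof to diverge from. Both halves are handled properly --- transferring the translation onto $u$ via the convolution, using continuity of $L^1$-translations for uniform continuity, and the translation estimate $\| u(\cdot+h)-u\|_{L^1} \le |Du|(\R^d)\,|h|$ (proved by smooth approximation in the strict $BV$ sense) for the Lipschitz bound.
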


\subsection{$L^1$ stability}

\begin{prop}[$L^1$ stability]\label[proposition]{prop:L1stability} Let $u, v$ be two entropy solutions to \eqref{eq:PDE} on $[0,T_1)$ and $[0,T_2)$, respectively, with initial datum $u_0, v_0 \in L^\infty\cap BV(\R^d)$. 

Then, for all $T\in (0, \min(T_1, T_2))$, there exists $C_T>0$ depending on $\|v_0 \|_{BV\cap L^\infty}$ such that
\begin{equation}
\esssup_{(0,T)} \| u - v\|_{L^1} \le C_T \| u_0 -v_0 \|_{L^1}.
\end{equation}
\end{prop}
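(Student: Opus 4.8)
The plan is to run a Kruzhkov doubling-of-variables argument, exploiting the fact that the two dangerous terms identified in the introduction can be bounded directly through \cref{lem:divboundA} and \cref{lem:divboundB}, so that we never need to give a meaning to $\nab f(v)\cdot K\ast(u-v)$. First I would specialize the entropy inequality of \cref{def:entropysol} to the Kruzhkov family $\eta(\cdot)=|\cdot-k|$ (as a limit of smooth convex entropies), for which one computes $q(\xi)=\sgn(\xi-k)(f(\xi)-f(k))$ and $\eta'(\xi)f(\xi)-q(\xi)=\sgn(\xi-k)f(k)$, so that the $\div K$-source term reads $-\sgn(\cdot-k)f(k)\,\div K\ast(\cdot)$. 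Writing the resulting inequality for $u(t,x)$ with $k=v(s,y)$ and for $v(s,y)$ with $k=u(t,x)$, I would test both against a non-negative product test function $\phi(t,x,s,y)=\theta(\tfrac{t+s}{2})\rho_{\ep_0}(t-s)\psi(x+y)\varphi_\ep(x-y)$, add the two inequalities, and integrate in all variables.

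Since $u$ and $v$ are Lipschitz in time (\cref{prop:timecontinuity}), I would first send $\ep_0\to0$ to collapse the time variables ($s=t$), reducing to a fixed-time spatial double integral. The time derivatives produce the quantity governing $\frac{d}{dt}\int|u-v|\psi$, while the convective flux contributes, writing $V:=K\ast u(t,\cdot)$ and $W:=K\ast v(t,\cdot)$,
\[
\iint \big[(V(x)+W(y))\cdot\nab\psi\,\varphi_\ep+(V(x)-W(y))\cdot\psi\,\nab\varphi_\ep\big]\,\sgn(u-v)(f(u)-f(v))\,dx\,dy.
\]
The terms carrying $\nab\psi$ are harmless: taking $\psi=\psi_R$ equal to $1$ on $B_R$, they are supported in an annulus and vanish as $R\to\infty$ by the $L^1$-integrability of $u-v$. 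The genuinely singular piece is the one carrying $\psi\,\nab\varphi_\ep$, which I would recast as a $\div_y$-divergence in order to apply the lemmas, using the splitting
\[
K\ast u(t,x)-K\ast v(t,y)=K\ast(u-v)(t,x)+\big(K\ast v(t,x)-K\ast v(t,y)\big).
\]

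The first summand depends on $x$ only and satisfies $K\ast(u-v)(t,\cdot)\in L^\infty$ with divergence $(\div K)\ast(u-v)\in L^1$, so by \cref{lem:divboundA} its contribution is bounded by $C\,|Dv|(\R^d)\,\|K\|_{L^\infty}\|u-v\|_{L^1}\|f'\|_{L^\infty}$. The second summand has the form $V(x)-V(y)$ with $V=K\ast v(t,\cdot)$ globally Lipschitz (\cref{lem:regV}), so \cref{lem:divboundB} bounds its contribution by $C\,|Dv|(\R^d)\,\|\nab(K\ast v)\|_{L^\infty}\|f'\|_{L^\infty}M_1(\varphi_\ep)$, which vanishes as $\ep\to0$ since $M_1(\varphi_\ep)=O(\ep)$. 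The divergence generated upon integrating by parts the $y$-dependent field $W(y)$ combines with the two $\div K$ source terms; after rearrangement this leftover equals $\int\sgn(u-v)f(v)\,(\div K)\ast(v-u)\,\psi$ in the limit $\ep\to0$, hence is bounded by $\|f(v)\|_{L^\infty}|\div K|(\R^d)\|u-v\|_{L^1}$.

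Passing to the limits $\ep\to0$ and then $R\to\infty$, every singular and cutoff contribution either vanishes or is absorbed, and I obtain $\frac{d}{dt}\|u-v\|_{L^1}\le C(t)\|u-v\|_{L^1}$, where $C(t)$ depends only on the local Lipschitz norms of $f$ on the range of the solutions, on $\|K\|_{L^\infty\cap BV}$, and on $|Dv(t)|(\R^d)$. Using the uniform $L^\infty$ and $BV$ bounds of \cref{prop:BV} on $[0,T]$ gives $C(t)\le C_T$, and Gronwall's lemma together with the time-continuity at $t=0$ yields $\esssup_{(0,T)}\|u-v\|_{L^1}\le C_T\|u_0-v_0\|_{L^1}$. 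The main obstacle is precisely the bookkeeping of the singular convective term: recasting it in divergence form, matching the pieces to the hypotheses of \cref{lem:divboundA} and \cref{lem:divboundB} with the correct signs, and verifying that the residual divergence terms cancel against the source down to a quantity controlled by $\|u-v\|_{L^1}$; the interplay of the three limits $\ep_0\to0$, $\ep\to0$, $R\to\infty$ also requires care.
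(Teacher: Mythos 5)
Your proposal is correct and follows essentially the same route as the paper's proof: Kruzhkov doubling with a product test function, collapsing the time variables first, splitting the singular term via $K\ast u(t,x)-K\ast v(t,y)=K\ast(u-v)(t,x)+\big(K\ast v(t,x)-K\ast v(t,y)\big)$ so that \cref{lem:divboundA} handles the first piece and \cref{lem:divboundB} (with $M_1(\varphi_\ep)=O(\ep)$) the second, letting the residual $\div(K\ast v)$ divergence cancel against the $\div K$ source terms down to the $\sgn(u-v)f(v)\,\div K\ast(u-v)$ remainder, and concluding by Gr\"onwall using \cref{prop:BV}. The only deviations are cosmetic: a product-form cutoff $\theta\,\psi$ in place of the paper's joint $\psi(t,x)$, and a differential rather than integral Gr\"onwall inequality.
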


The proof of this stability result relies on the combination of the standard doubling of variables argument and our functional inequalities \cref{lem:divboundA} and \cref{lem:divboundB}, which prevent us from giving sense to the ill-defined quantity $$\sgn(u-v) \nab f(v) \cdot K \ast (u-v)$$ arising in formal computations.

\begin{proof}
Let $u,v$ as in the statement. Given any $\eta\in C^2$ convex, one has in the sense of distributions:
    \begin{equation*}
        \p_t\eta(u) \le - \div(q(u) K\ast u) +( q(u)- f(u)\eta'(u))\div K \ast u,
    \end{equation*}
    where $q'(\xi) := f'(\xi)\eta'(\xi)$. In particular, one can justify that for any $k\in \R$,
    \begin{equation*}
        \p_t|u-k| \le -\div(\sgn(u-k) (f(u)-f(k)) K \ast u) - f(k) \sgn(u-k) \div K \ast u.
    \end{equation*}
    Therefore,
    \begin{align*}
        \p_t |u(t,x)-v(s,y)| &\le - \div_x (\sgn(u(t,x)-v(s,y))(f(u(t,x))-f(v(s,y))) K \ast u)(t,x) \\
        &- f(v(s,y)) \sgn(u(t,x)-v(s,y)) \div K \ast u(t,x), \\
        \p_s |u(t,x)-v(s,y)| &\le -\div_y(\sgn(u(t,x)-v(s,y))(f(u(t,x))-f(v(s,y))) K \ast v)(s,y) \\
        &- f(u(t,x)) \sgn(v(s,y)-u(t,x)) \div K \ast v(s,y). 
    \end{align*}
    Integrating with respect to a smooth nonnegative and compactly supported test function $\varphi \equiv \varphi(t,s,x,y)$ and summing these lines gives
    \begin{align*}
     & -\int_{[0,T] \times \R^d \times \R^d } \varphi(0, s, x, y) |u_0(x) - v(s,y) |\, dsdxdy - \int_{[0,T] \times \R^d \times \R^d } \varphi(t, 0, x, y) |u(t,x) - v_0(y) |\, dsdxdy  \\
       & - \int_{[0,T]\times [0,T] \times \R^d\times \R^d} (\p_t+\p_s)\varphi(t,s,x,y) |u(t,x)-v(s,y)|\,  dtdsdxdy \\
        &\le  \int_{[0,T]\times [0,T] \times \R^d\times \R^d}\big( \nab_x \varphi \cdot K \ast u(t,x) + \nab_y\varphi\cdot K \ast v(s,y) \big) \\
        &\quad \times  \sgn(u(t,x)-v(s,y))(f(u(t,x))-f(v(s,y)))\, dtdsdxdy \\
        &- \int_{[0,T]\times [0,T] \times \R^d\times \R^d} \varphi(t,s,x,y) \\
        &\quad \times \sgn(u(t,x)-v(s,y)) (f(v(s,y))\div K \ast u(t,x) - f(u(t,x))\div K \ast v(s,y))\, dtdsdxdy.
    \end{align*}
    We then take
    \begin{equation*}
        \varphi(t,s,x,y) := \psi\big( \frac{t+s}{2}, \frac{x+y}{2} \big) \frac{1}{\d \ep^{d}} \varphi_1\big(\frac{t-s}{\d}\big) \varphi_2\big( \frac{x-y}{\ep}\big),
    \end{equation*}
    for nonnegative, smooth, and compactly supported $\psi, \varphi_1, \varphi_2$ such that $\varphi_1, \varphi_2$ are of mass $1$. We denote $\varphi_1^\d := \d^{-1}\varphi_1(\cdot/ \d) $ and $\varphi_2^\ep := \ep^{-d} \varphi_2(\cdot / \ep)$, 
    so that
    \begin{equation*}
        \p_t \varphi +  \p_s\varphi = \varphi_1^\d\varphi_2^\ep\p_t \psi .
    \end{equation*}
    In particular, the only term involving a time derivative can be rewritten as
    \begin{align*}
    -\int_{[0,T]\times [0,T]\times \R^d\times \R^d} \p_t \psi\big(\frac{t+s}{2}, \frac{x+y}{2}\big) \varphi_2^\ep (x-y) \varphi_1^\d (t-s) |u(t,x) - v(s,y) |\, dtdsdxdy. 
    \end{align*}
    Sending $\d\to 0$ appealing to classical convergence results (such as dominated convergence, using that each integral is actually localised on a compact set), one obtains
    \begin{align*}
    &- \int_{ \R^d\times \R^d} \psi\big( 0, \frac{x+y}{2}\big) \varphi_2^\ep (x-y)  |u_0(x) - v_0(y) |\, dxdy  \\
    &- \int_{[0,T]\times \R^d\times \R^d } \p_t \psi\big( t, \frac{x+y}{2}\big) \varphi_2^\ep(x-y) |u(t,x) - v(t,y) |\, dtdxdy \\
    &\le  \int_{[0,T]\times \R^d\times \R^d } \big( \nab_x\varphi(t,t,x,y) \cdot K \ast u(t,x) + \nab_y \varphi(t,t,x,y) \cdot K \ast v(t,y) \big) \\
    &\quad \times \sgn(u(t,x) - v(t,y) ) \big[ f(u(t,x) ) - f( v(t,y) )  \big]\, dtdxdy \\
    &- \int_{[0,T]\times \R^d\times \R^d } \psi \big( t, \frac{x+y}{2}\big) \varphi^\ep_2(x-y)  \\
    &\quad \times \sgn(u(t,x) - v(t,y) ) \big[ f(v(t,y) ) \div K \ast u(t,x) - f( u(t,x) ) \div K \ast v(t,y)  \big]\, dtdxdy.
\end{align*}     
Notice that
\begin{equation*}
 \nab_x\varphi + \nab_y\varphi = \varphi_1^\d\varphi_2^\ep\nab_x \psi,
\end{equation*}
so that the right-hand side above can be written as
\begin{align}\label{eq:RHStmp}
& \int_{[0,T]\times \R^d\times \R^d } \varphi_2^\ep(x-y) \nab_x \psi\big(t, \frac{x+y}{2}\big) \cdot K \ast u(t,x) \nn \\
&\quad\times  \sgn(u(t,x) - v(t,y) ) \big[ f(u(t,x) ) - f( v(t,y) )  \big]\, dtdxdy \nn \\
&- \int_{[0,T]\times \R^d\times \R^d } \nab_y (\psi \varphi_2^\ep)(t,t,x,y) \cdot \big( K \ast u(t,x) - K \ast v(t,y) \big) \nn \\
&\quad \times \sgn(u(t,x) - v(t,y) ) \big[ f(u(t,x) ) - f( v(t,y) )  \big]\, dtdxdy \nn \\
    &- \int_{[0,T]\times \R^d\times \R^d } \psi \big( t, \frac{x+y}{2}\big) \varphi^\ep_2(x-y) \nn  \\
    &\quad \times \sgn(u(t,x) - v(t,y) ) \big[ f(v(t,y) ) \div K \ast u(t,x) - f( u(t,x) ) \div K \ast v(t,y)  \big]\, dtdxdy.
\end{align}
    Before we can identify each of the integrals above, let us develop the last term as follows:
    \begin{multline*}
          f(v(t,y))\div K \ast u(t,x) - f(u(t,x))\div K \ast v(t,y) \\
         =  f(v(t,y)) \big( \div K \ast u(t,x)-\div K \ast v(t,y) \big) -( f(u(t,x))-f(v(t,y) ) ) \div K  \ast v(t,y).
    \end{multline*}
    Thus, \eqref{eq:RHStmp} writes $I + II + III$, where
    \begin{align}
        I &:=   \int_{[0,T]\times \R^d\times \R^d } \varphi_2^\ep(x-y) \nab_x \psi\big(t, \frac{x+y}{2}\big) \cdot K \ast u(t,x) \nn \\
&\quad\times  \sgn(u(t,x) - v(t,y) ) \big[ f(u(t,x) ) - f( v(t,y) )  \big]\, dtdxdy \nn \\
        II &:= - \int_{[0,T]\times \R^d\times \R^d } \div_y \big[ \psi(t,\frac{x+y}{2}) \varphi_2^\ep (x-y)  ( K \ast u(t,x) - K \ast v(t,y) ) \big] \nn \\
&\quad \times \sgn(u(t,x) - v(t,y) ) \big[ f(u(t,x) ) - f( v(t,y) )  \big]\, dtdxdy \nn  \\
        III &:= -\int_{[0,T]\times \R^d\times \R^d } \psi \big( t, \frac{x+y}{2}\big) \varphi^\ep_2(x-y) \nn  \\
    &\quad \times \sgn(u(t,x) - v(t,y) )  f(v(t,y) ) \big[\div K \ast u(t,x) -  \div K \ast v(t,y)  \big]\, dtdxdy.
    \end{align}
    We can now identify these terms. Indeed, as $\ep\to 0$, we can again appeal to classical convergence theorems in order to obtain
    \begin{align*}
    I &\xrightarrow[\ep\to 0]{}  \int_{[0,T]\times \R^d} \nab_x \psi(t, x) \cdot K \ast u(t,x) \sgn( u-v) (t,x) \big[ f(u(t,x)) - f(v(t,x)) \big] \, dtdx \\
    &\quad = -\bigg\langle \psi, \div_x \big[ \sgn(u-v) (f(u) - f(v)) K \ast u \big] \bigg\rangle \\
    III & \xrightarrow[\ep\to 0]{} \bigg\langle \psi, \sgn(u-v) f(v) \div K \ast (u - v) \bigg\rangle.
    \end{align*}
    The difficulty lies in estimating $II$, for which we cannot appeal to standard convergence theorems since $v(t,\cdot) \in BV(\R^d)$. Using
\begin{align*}
K \ast u(t,x) - K \ast v(t,y) = K \ast (u-v)(t,x) + K \ast v(t,x) - K \ast v(t,y), 
\end{align*}    
    we decompose $II$ into $II_a + II_b$, where
\begin{align*}
II_a &= - \int_{[0,T]\times \R^d\times \R^d } \div_y \big[ \psi(t,\frac{x+y}{2}) \varphi_2^\ep (x-y) K \ast (u-v) (t,x)  \big]\\
&\times  \sgn(u(t,x) - v(t,y) ) \big[ f\circ u(t,x) - f\circ v(t,y) \big] \, dtdxdy\\
II_b &= - \int_{[0,T]\times \R^d\times \R^d } \div_y \big[ \psi(t,\frac{x+y}{2}) \varphi_2^\ep (x-y)  ( K \ast v(t,x) - K \ast v(t,y) ) \big] \\
&\times  \sgn(u(t,x) - v(t,y) ) \big[ f\circ u(t,x) - f\circ v(t,y) \big] \, dtdxdy.
\end{align*}    
     We note that $K \ast u(t)$ is globally Lipschitz continuous, for a.e. $t>0$, since $u(t,\cdot)\in BV(\R^d)$ (see \cref{lem:regV}). We thus use \cref{lem:divboundA} and \cref{lem:divboundB} to obtain, denoting $M:= 2 \| v\|_{L^\infty_{tx}}$
  \begin{align*}
  II_a &\le \int_0^T \| \psi(t)  \|_{\infty } | D v(t) |(\R^d) \| K \ast (u-v)(t) \|_{L^\infty} \|f'\|_{L^\infty([-M, M])} , \\
  II_b &\le \int_0^T \|\psi (t) \|_\infty | Dv(t)|(\R^d) \|f'\|_{L^\infty([-M, M])} \| \nab K \ast v (t) \|_{L^\infty} M_1(\varphi^\ep_2).
  \end{align*}
Since $K \ast v$ is Lipschitz continuous, sending $\ep\to 0$ yields $II_b\to 0$. We therefore obtain
\begin{align*}
	&- \int_{\R^d} \psi(0, x) |u_0 (x ) - v_0(x) |\, dx \\
	&- \int_{[0,T]\times \R^d} \p_t \psi(t,x) |u(t,x) - v(t,x) |\, dxdt \\
	&\le - \int_{[0,T]\times \R^d} \nab_x \psi(t, x) \cdot K \ast u(t,x) \sgn( u-v) (t,x) \big[ f(u(t,x)) - f(v(t,x)) \big] \, dtdx \\
	&+ \|\psi \|_\infty \esssup_{t\in (0,T)}| D v(t)|(\R^d) \|f'\|_{L^\infty([-M,M])} \int_0^T \| K \ast (u-v) \|_{L^\infty}\, dt \\
	&+ \int_{[0,T]\times \R^d } \psi  \sgn(u - v )  f(v ) \div K \ast (u- v)   \, dtdx.
\end{align*}  
  We now finish the proof by taking $\psi\to 1$, which can be done in a straightforward manner since $K \ast u\in L^\infty((0,T), L^1(\R^d))$. Finally, using the crucial bound on the force $K \in L^\infty$, we overall obtain:
  \begin{align*}
  \| u(T) - v(T) \|_{L^1} &\le \| u_0 - v_0 \|_{L^1 } + \esssup_{(0,T)}| Dv |(\R^d) \esssup_{[-M,M]}|f'| \| K \|_{L^\infty} \int_0^T \| u(t) - v(t) \|_{L^1}\, dt \\
  &+ \esssup_{[-M,M]} |f| | \div K  |(\R^d)  \int_0^T \| u(t) - v(t) \|_{L^1}\, dt.
  \end{align*}
  We conclude with the bounds of \cref{prop:BV} and by applying Gr\"onwall's lemma.
\end{proof}

\begin{cor}\label[corollary]{cor:identityentropyviscosity} Vanishing viscosity solutions and entropy solutions are the same. 
\end{cor}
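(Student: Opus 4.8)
The plan is to deduce the corollary from two ingredients already established: the passage-to-the-limit argument carried out at the end of Section 2, and the uniqueness of entropy solutions provided by \cref{prop:L1stability}. The argument splits into the two inclusions between the two notions of solution.

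First I would show that every vanishing viscosity solution is an entropy solution. Let $u$ be a vanishing viscosity solution in the sense of \cref{def:viscositysol}, so that there is a sequence $\ep_k \to 0$ and viscous solutions $u_k$ with $u_k \to u$ in $C_{loc}([0,T), L^1(\R^d))$. This is exactly the setting of the construction in Section 2: the uniform estimates of \cref{prop:BV} pass to the limit, giving $u \in L^\infty_{loc}((0,T); L^\infty \cap BV(\R^d)) \cap C([0,T), L^1(\R^d))$; and, for every convex $\eta \in C^2$ with $q' = \eta' f'$, the viscous entropy inequality $\p_t \eta(u_k) \le \div(q(u_k)\, K\ast u_k) + (\eta'(u_k) f(u_k) - q(u_k)) \div K \ast u_k + \ep_k \D \eta(u_k)$ passes to the limit. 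Here one uses that $f$ (hence $q$) is locally Lipschitz and that $u_k \to u$ strongly in $L^1$ with uniform $L^\infty$ control, so that $f(u_k) \to f(u)$ and $q(u_k) \to q(u)$ in $C([0,T], L^1(\R^d))$, while the viscous term $\ep_k \D \eta(u_k) \to 0$ in the sense of distributions. Hence $u$ satisfies the entropy condition of \cref{def:entropysol}.

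Conversely, I would show that every entropy solution is a vanishing viscosity solution. Let $u$ be an entropy solution with datum $u_0 \in L^\infty \cap BV(\R^d)$. The construction of Section 2 produces, from the same datum $u_0$, a function $\tilde u$ that is simultaneously a vanishing viscosity solution (by its very definition as a limit of viscous approximations) and an entropy solution. Applying \cref{prop:L1stability} with the common initial datum $u_0 = v_0$ gives $\esssup_{(0,T)} \| u - \tilde u \|_{L^1} = 0$, so that $u = \tilde u$ for a.e.\ $t$. Thus $u$ coincides with a vanishing viscosity solution and is therefore itself one. Combining the two inclusions, the two notions describe the same (unique) function, which in particular also yields the uniqueness of vanishing viscosity solutions.

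I do not expect a serious obstacle here, since the analytical content is entirely contained in the uniform estimates and the entropy-inequality limit of Section 2, together with \cref{prop:L1stability}. The only point requiring care is that \cref{def:viscositysol} fixes one particular approximating sequence for a given vanishing viscosity solution; I must check that the entropy inequality for the limit holds independently of this choice, which is precisely what the limit argument above delivers.
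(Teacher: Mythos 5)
Your proposal is correct and follows essentially the same route as the paper: the forward direction repeats the passage-to-the-limit argument from the existence construction to verify the entropy condition, and the converse direction builds a vanishing viscosity solution $\tilde u$ from the same datum and identifies it with $u$ via the $L^1$ stability of \cref{prop:L1stability}. The extra details you supply (convergence of $f(u_k)$, vanishing of the viscous term, independence of the approximating sequence) are consistent with, and implicit in, the paper's argument.
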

\begin{proof}
Consider a vanishing viscosity solution $u$, and denote $(u_k)_k$ a viscous approximation, with viscosity sequence $(\ep_k)_k$. We then proceed as in the contruction of the entropy solution, which shows that $u$ satisfies the entropy condition. 

Conversely, suppose that $u$ is an entropy solution. Now, consider some $(\ep_k)_k$ such that $\ep_k\to 0$. There is a unique solution $u_k$ to \eqref{eq:viscousPDE}, starting from $u_0$. As in the proof of existence, we can extract a subsequence and construct a vanishing viscosity  solution $\tilde u$ from this subsequence, which satisfies the entropy condition. By the uniqueness theorem, we have $\tilde u = u$. 
\end{proof}

\subsection{$L^1$ rate of convergence}

A natural question concerns the discrepancy between the viscous approximation and the entropy solution. 
\begin{prop}[Rate of convergence for the viscous approximation]\label[proposition]{prop:rate}
	Let $u$ be the unique entropy solution to \eqref{eq:PDE}, and denote $u_\ep$ the unique solution to \eqref{eq:viscousPDE}. For all $T$ in the lifespan of $u$, there exists $C_T >0$ depending on $\|u_0 \|_{L^\infty\cap BV}$ such that
	\begin{equation}
	\forall t\in [0,T], \quad \| u(t) - u_\ep(t) \|_{L^1} \le C_T \sqrt{\ep}.
\end{equation}	 
\end{prop}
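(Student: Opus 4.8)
The plan is to adapt the Kuznetsov doubling-of-variables argument from \cite{Kuznetsov1976accuracy} to the nonlocal setting, comparing the entropy solution $u$ with the viscous solution $u_\ep$, and to absorb the ill-defined nonlocal term exactly as in the proof of \cref{prop:L1stability}, via \cref{lem:divboundA} and \cref{lem:divboundB}. First I would record that $u_\ep$ satisfies, for every $k\in\R$, the viscous Kru\v{z}kov inequality
\begin{equation*}
\p_t |u_\ep - k| \le -\div\big(\sgn(u_\ep - k)(f(u_\ep)-f(k)) K\ast u_\ep\big) - f(k)\sgn(u_\ep - k)\div K\ast u_\ep + \ep\,\D|u_\ep - k|,
\end{equation*}
since $\sgn(u_\ep - k)\D u_\ep \le \D|u_\ep - k|$ distributionally; this is the same computation as in \cref{def:entropysol}, now with the extra viscous term carrying a favorable sign after integration against a nonnegative test function. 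Symmetrically, $u$ satisfies the entropy inequality from \cref{def:entropysol}. I would then double the variables, writing $u=u(t,x)$ and $v=u_\ep(s,y)$, and integrate the sum of the two inequalities against the same regularized test kernel
$\varphi(t,s,x,y) = \psi\big(\tfrac{t+s}{2}\big)\,\varphi_1^\d(t-s)\,\varphi_2^\ep(x-y)$ used in \cref{prop:L1stability}, now with $\psi$ depending only on time.

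The nonlocal transport terms are handled verbatim as before: after the decomposition $K\ast u(t,x) - K\ast u_\ep(s,y) = K\ast(u-u_\ep) + (K\ast u_\ep(\cdot,x) - K\ast u_\ep(\cdot,y))$, the pieces $II_a$ and $II_b$ are bounded through \cref{lem:divboundA} and \cref{lem:divboundB} by terms involving $|Du_\ep|(\R^d)$, which is controlled uniformly by the total-variation bound \eqref{eq:localTV} of \cref{prop:BV}; the $\div K$ term contributes a Gr\"onwall-type $\int_0^T\|u-u_\ep\|_{L^1}$ factor using $K\in L^\infty$. The genuinely new contribution compared to \cref{prop:L1stability} is the viscous term $\ep\,\D|u-u_\ep|$, which upon integration by parts lands two derivatives on the test kernel and must be estimated. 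The standard Kuznetsov estimate gives a bound of order $\ep/\ep_{\mathrm{spatial}} + \ep_{\mathrm{spatial}}$ where $\ep_{\mathrm{spatial}}$ is the spatial regularization scale; more precisely the viscous contribution is controlled by $C\,\ep\,\esssup_t |Du_\ep(t)|(\R^d)\int_0^T \|\nab\varphi_2^\ep\|_{L^1}$, of order $\ep/\ep_{\mathrm{spatial}}$ after using the uniform $BV$ bound, while the error from replacing the doubled argument by the diagonal produces the usual $O(\ep_{\mathrm{spatial}})$ modulus-of-continuity term.

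The hard part will be organizing the limits in the correct order and optimizing the regularization parameters. I would first send $\d\to 0$ to collapse the time doubling (exactly as in \cref{prop:L1stability}, justified by dominated convergence on the compact support), leaving a single time integral and the spatial kernel $\varphi_2^\ep$. At that stage one is left with three types of error: the spatial mismatch term of size $O(\ep_{\mathrm{spatial}})$ coming from the spatial modulus of continuity of both $u$ and $u_\ep$ (controlled by the uniform $BV$ bound), the viscous term of size $O(\ep/\ep_{\mathrm{spatial}})$, and the nonlocal terms $II_a$, $II_b$ which either feed into the Gr\"onwall loop or vanish. Choosing the spatial scale $\ep_{\mathrm{spatial}} = \sqrt{\ep}$ balances the two error contributions and yields the rate $\sqrt{\ep}$. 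Finally, letting $\psi\to 1$ in time (legitimate since $K\ast u_\ep \in L^\infty((0,T),L^1)$ uniformly) and applying Gr\"onwall's lemma to close
\begin{equation*}
\|u(t)-u_\ep(t)\|_{L^1} \le C_T\big(\|u_0 - u_0\|_{L^1} + \sqrt{\ep}\big) = C_T\sqrt{\ep},
\end{equation*}
gives the claim, the first $L^1$ term vanishing since both evolutions start from the same datum $u_0$. The main technical obstacle is ensuring that the \cref{lem:divboundA}--\cref{lem:divboundB} machinery, which was designed to tame the $\sgn(u-v)\nab f(v)\cdot K\ast(u-v)$ term when one argument is only $BV$, interacts cleanly with the extra viscous terms without degrading the $\sqrt{\ep}$ scaling.
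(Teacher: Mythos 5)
Your proposal matches the paper's proof in all essentials: both run a Kuznetsov doubling argument pairing the entropy inequality for $u$ with the viscous Kru\v{z}kov inequality for $u_\ep$, control the nonlocal terms with \cref{lem:divboundA} and \cref{lem:divboundB} together with the uniform $BV$ and time-continuity bounds of \cref{prop:BV}, bound the viscous contribution by $C\ep/\d$ against the $O(\d)$ spatial-modulus error, and optimize $\d=\sqrt{\ep}$. The only difference is organizational: the paper first isolates a general comparison estimate (\cref{lem:Kuznetsov}) valid for any $BV$ comparison function $v$ via the functional $\D_{\d,\eta}$, and then obtains the rate by computing $\D_{\d,\eta}\ge -C\ep/\d$ for $v=u_\ep$, whereas you inline that step into a single doubling computation.
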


The proof of this rate of convergence combines the classical argument of Kuznetsov \cite{Kuznetsov1976accuracy} and our inequalities \cref{lem:divboundA} and \cref{lem:divboundB}. Given $\varphi\equiv \varphi(t,x,s,y)$ nonnegative, smooth and compactly supported, an entropy solution $u$ to \eqref{eq:PDE} defined on $[0,T]$, and some $v\in L^\infty((0,T), L^\infty\cap BV(\R^d) ) \cap C([0,T], L^1(\R^d) )$, we define 
\begin{multline*} 
		\D(\varphi) := -\int_{[0,T]\times (\R^d)^2} \varphi(t,x, T, y) |u(t,x) - v(T, y) |\, dtdxdy \\
		+ \int_{[0,T]\times (\R^d)^2}\varphi(t,x, 0, y) |u(t,x) - v_0 (y) |\, dtdxdy  
		+ \int_{[0,T]^2\times (\R^d)^2} \p_s \varphi(t,x,s,y ) |u(t,x ) - v(s,y) |\, dtdsdxdy \\
		+ \int_{[0,T]^2\times (\R^d)^2} \nab_y \varphi(t,x,s,y) \cdot K \ast v(s,y) \sgn(v(s,y) - u(t,x) ) (f\circ v(s,y) - f\circ u(t,x) ) \, dtdsdxdy \\
		- \int_{[0,T]^2\times (\R^d)^2} \varphi(t,x,s,y) f\circ u(t,x) \sgn( v(s,y) - u(t,x) ) \div K \ast v(s,y) \, dtdsdxdy.
	\end{multline*}
In particular, if $v$ is an entropy solution to \eqref{eq:PDE}, we have $\D(\varphi) \ge 0$. We will consider  
\begin{align*}
	\varphi(t,s,x,y) := \varphi_1^\d (x-y) \varphi_2^\eta(t-s) ,
	\end{align*}
	where $\varphi_1^\d := \d^{-d}\varphi ( \cdot / \d) $ and $\varphi_2^\eta := \eta^{-1} \varphi_2(\cdot / \eta) $, and $\varphi_1, \varphi_2$ are smooth, nonnegative and compactly supported functions of mass $1$. In this situation, we denote $\D_{\d, \eta} \equiv \D(\varphi)$.
	
\begin{lemma}[à la Kuznetsov]\label[lemma]{lem:Kuznetsov} Let $u$ be an entropy solution to \eqref{eq:PDE} defined on $[0,T]$, and $v \in L^\infty((0,T), L^\infty\cap BV(\R^d) ) \cap C([0,T], L^1(\R^d) )$ satisfying the continuity estimate \eqref{eq:timecontinuity}. Then, there exists $C_T>0$ depending on $\| u_0\|_{L^\infty\cap BV}$ and $\| v\|_{L^\infty_t(L^\infty\cap BV)_x}$, such that 
\begin{equation}\label{eq:Kuz}
\| u(T) - v(T) \|_{L^1} \le C_T \big[\| u_0 - v_0 \|_{L^1} + C_T (\d+ \sqrt{\eta}) - \inf_{(0,T)} \D_{\d,\eta} \big]  
\end{equation}
\end{lemma}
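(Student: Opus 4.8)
The plan is to run the classical Kuznetsov symmetry argument, feeding the entropy inequality for $u$ (in variables $(t,x)$) and the entropy-type functional $\D_{\d,\eta}$ for $v$ (in variables $(s,y)$) into each other, and then to control the error terms using \cref{lem:divboundA} and \cref{lem:divboundB}. First I would write the entropy inequality satisfied by $u$ against the test function $\varphi$ built from the mollifiers $\varphi_1^\d, \varphi_2^\eta$; this produces a quantity $\Lambda(\varphi)$ analogous to $\D(\varphi)$ but with the roles of $(t,x)$ and $(s,y)$ exchanged and with $u$ playing the role of the tested solution. Since $u$ is an entropy solution, $\Lambda(\varphi)\ge 0$. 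Summing the inequality for $u$ and the definition of $\D(\varphi)$, the interior flux terms and the time-derivative terms combine, just as in the standard doubling of variables for scalar conservation laws, leaving boundary terms at $s=0,T$ and $t=0,T$ together with the nonlocal error terms coming from the difference of the force fields.

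Next I would extract the quantity $\| u(T)-v(T)\|_{L^1}$ from the boundary terms. As $\d,\eta\to 0$ the mollified boundary integrals at $t=s=T$ converge to $\|u(T)-v(T)\|_{L^1}$ up to an error; the discrepancy between, say, $\int \varphi_2^\eta(t-T)|u(t,x)-v(T,y)|$ and the exact value $\|u(T)-v(T)\|_{L^1}$ is controlled by the time-continuity modulus of both $u$ and $v$. Here I would use that $u$ satisfies the Lipschitz-in-time estimate from \cref{prop:timecontinuity} and that $v$ satisfies the $\sqrt{h}$ modulus \eqref{eq:timecontinuity}; mollifying against $\varphi_2^\eta$ then yields an error of order $\eta$ from $u$ and order $\sqrt{\eta}$ from $v$, while the spatial mollification against $\varphi_1^\d$, combined with the uniform $BV$ bound on $v$ (and on $u$, from \cref{prop:BV}), produces an error of order $\d$. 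This explains the $\d+\sqrt{\eta}$ term in \eqref{eq:Kuz}. Symmetrically, the boundary terms at $t=s=0$ contribute $\|u_0-v_0\|_{L^1}$ up to the same order of error.

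The genuinely nonlocal part is where this argument departs from the classical one. After combining the two functionals, the force-field terms no longer cancel exactly; instead one is left, exactly as in the proof of \cref{prop:L1stability}, with a term of the shape
\begin{align*}
\int \div_y\big[\psi\, \varphi_1^\d(x-y)\,(K\ast u(t,x)-K\ast v(s,y))\big]\,\sgn(u-v)\,(f\circ u - f\circ v)\,,
\end{align*}
which I would split via $K\ast u(t,x)-K\ast v(s,y) = K\ast(u-v)(t,x) + (K\ast v(t,x)-K\ast v(s,y))$ into a piece estimated by \cref{lem:divboundA} and a piece estimated by \cref{lem:divboundB}. The first contributes a term proportional to $|Dv|(\R^d)\,\|f'\|_\infty\int_0^T\|K\ast(u-v)\|_{L^\infty}$, which by $K\in L^\infty$ is bounded by $\int_0^T\|u-v\|_{L^1}$; the second, by \cref{lem:divboundB}, contributes $M_1(\varphi_1^\d)=\d\,M_1(\varphi_1)$ and hence vanishes in the limit (absorbed into the $\d$ error). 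The remaining $\div K$ term is handled directly using $|\div K|(\R^d)<\infty$, again producing $\int_0^T\|u-v\|_{L^1}$. Collecting everything gives an inequality of the form $\|u(T)-v(T)\|_{L^1}\le \|u_0-v_0\|_{L^1} + C(\d+\sqrt{\eta}) - \inf_{(0,T)}\D_{\d,\eta} + C\int_0^T\|u-v\|_{L^1}$, and a final application of Gr\"onwall's lemma absorbs the integral term into the constant $C_T$, yielding \eqref{eq:Kuz}.

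I expect the main obstacle to be bookkeeping the limit $\d,\eta\to 0$ rigorously: one must verify that every term except the two boundary contributions and the nonlocal errors either converges to the stated quantities or is dominated by $C_T(\d+\sqrt{\eta})$, and in particular that the $\inf_{(0,T)}\D_{\d,\eta}$ appears with the correct sign. The delicate point is that $\D_{\d,\eta}$ is only nonnegative in the limit when $v$ is itself an entropy solution; for a general $v$ (here the viscous approximation $u_\ep$) one keeps $-\inf\D_{\d,\eta}$ as an explicit term, to be estimated separately when this lemma is applied in the proof of \cref{prop:rate}. Care with the time-continuity estimates — matching the $\eta$ versus $\sqrt{\eta}$ scaling to the right solution — is where the argument is most error-prone.
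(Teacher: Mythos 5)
Your proposal follows the same route as the paper: doubling of variables with the entropy inequality for $u$ played against the deficit functional $\D_{\d,\eta}$ for $v$; extraction of the boundary terms via the triangle inequality, using the BV bounds (order $\d$) and the two different time moduli (Lipschitz for $u$ from \cref{prop:timecontinuity}, giving $\eta$, versus the $\sqrt h$ estimate \eqref{eq:timecontinuity} for $v$, giving $\sqrt\eta$); treatment of the nonlocal flux terms via \cref{lem:divboundA} and \cref{lem:divboundB}; and a final Gr\"onwall argument, with $-\inf_{(0,T)}\D_{\d,\eta}$ kept as an explicit term. The structure, the role of each hypothesis, and the source of each error term are identified correctly.

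There is, however, one step which, as written, does not go through. After your splitting $K\ast u(t,x)-K\ast v(s,y)=K\ast(u-v)(t,x)+\big(K\ast v(t,x)-K\ast v(s,y)\big)$, you invoke \cref{lem:divboundB} on the second piece. But \cref{lem:divboundB} requires a field of the form $V(x)-V(y)$ for a \emph{single} Lipschitz map $V$, whereas $K\ast v(t,x)-K\ast v(s,y)$ mixes two different times. One further splitting is needed,
\begin{equation*}
K\ast v(t,x)-K\ast v(s,y)=\big(K\ast v(s,x)-K\ast v(s,y)\big)+K\ast\big(v(t)-v(s)\big)(x),
\end{equation*}
where the first bracket is handled by \cref{lem:divboundB} with $V=K\ast v(s)$ (Lipschitz uniformly in $s$ by \cref{lem:regV} and the uniform BV bound on $v$), giving the $O(\d)$ contribution you announce, while the second bracket is a field depending only on $x$, so \cref{lem:divboundA} applies with $\|K\ast(v(t)-v(s))\|_{L^\infty}\le\|K\|_{L^\infty}\|v(t)-v(s)\|_{L^1}\le C\sqrt{|t-s|}$; after integration against $\varphi_2^\eta(t-s)$ this contributes an (allowed) $O(\sqrt\eta)$ term. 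The same time/space mismatch occurs in the $\div K$ term, which you propose to handle ``directly'': the quantity appearing there is $\div K\ast u(t,x)-\div K\ast v(s,y)$, and it must likewise be split into $\div K\ast(u-v)(t,x)$ (the Gr\"onwall term), a time increment $\div K\ast(v(t)-v(s))(x)$ (order $\sqrt\eta$, since $\div K$ is a finite measure and $v$ satisfies \eqref{eq:timecontinuity}), and a space increment $\div K\ast v(s,x)-\div K\ast v(s,y)$ (order $\d$, using $\|v(s,\cdot)-v(s,\cdot-h)\|_{L^1}\le |h|\,|Dv(s)|(\R^d)$). These are exactly the extra decompositions carried out in the paper; they only produce errors of the admissible order $\d+\sqrt\eta$, so your argument is repaired without any new idea --- but as stated those two estimates are not justified.
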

\begin{remark}
Assuming that $v$ is itself an entropy solution, we have $\D_{\d,\eta} \ge 0$. Therefore, sending $\d,\eta\to 0$ gives back the stability estimate of \cref{prop:L1stability}. Otherwise, this lemma allows to derive rates of convergence for several approximation schemes. 
\end{remark}

Letting aside the proof of this lemma for the moment, we now prove a rate of convergence for viscosity solutions. 

\begin{proof}[Proof of \cref{prop:rate}]
	Consider $u_\ep$ to be the unique solution to \eqref{eq:viscousPDE} starting from $u_0\in L^\infty\cap BV(\R^d)$. Applying \cref{lem:Kuznetsov} to $v=u_\ep$ and using the equation \eqref{eq:viscousPDE}, we obtain
	\begin{align*}
	\D_{\d,\eta} &= - \ep \int_{[0,T]^2\times (\R^d)^2} \varphi(t,x,s,y) \sgn(u_\ep(s,y) - u(t,x) ) \D u_\ep(s,y) \, dtdsdxdy.
	\end{align*}
	Since $$\sgn (u_\ep-k) \D u_\ep = \D |u_\ep - k | - \d_{u_\ep = k} |\nab u_\ep |^2,$$ in the sense of distributions, we have
	\begin{align*}
	\D_{\d,\eta} &\ge \ep \int_{[0,T]^2\times (\R^d)^2} \varphi(t,x,s,y)  \D_y |u_\ep (s,y) - u(t,x) |\, dtdsdxdy \\
	&= \ep \int_{[0,T]^2\times (\R^d)^2} \D_y\varphi(t,x,s,y)   |u_\ep (s,y) - u(t,x) |\, dtdsdxdy .
	\end{align*}
	Since $u_\ep$ is BV in space, locally uniformly in time, we obtain
	\begin{align*}
	\D_{\d,\eta} &\ge - C\frac{\ep}{\d}.
	\end{align*}
	Taking $\eta\to 0$ and optimizing over $\d$ in \eqref{eq:Kuz}, we obtain the result. 
\end{proof}

\begin{proof}[Proof of \cref{lem:Kuznetsov}]
	Consider an entropy solution $u$ to \eqref{eq:PDE}, and a function $v\in L^\infty_{loc}((0,T), L^1\cap L^\infty(\R^d)) $. We start from the entropy condition satisfied by $u$, using the doubling of variables argument:
	\begin{align*}
		&\int_{[0,T]\times \R^d\times \R^d} \varphi(T,x, s, y) |u(T, x) - v(s, y) |\, dsdxdy \\
		&-\int_{[0,T]\times [0,T]\times \R^d\times \R^d } \p_t \varphi(t,x,s,y)  |u(t,x) - v(s, y) |\, dtdsdxdy \\
		&- \int_{[0,T]\times \R^d\times \R^d} \varphi(0, x, s, y) |u_0 (x)  - v(s,y) |\, dsdxdy \\
		&\le \int_{[0,T]\times [0,T]\times \R^d\times \R^d } \nab_x\varphi(t,x,s,y) \cdot K \ast u(t,x)  \sgn( u(t,x ) - v(s, y) ) \big( f\circ u(t,x) - f\circ v(s,y) \big) \, dtds dxdy \\
		&- \int_{[0,T]\times [0,T]\times \R^d\times \R^d } \varphi(t,x,s,y) f\circ v(s,y) \sgn (u(t,x) - v(s, y)  ) \div K \ast u(t,x) \, dtdsdxdy.
	\end{align*}
	We then reverse the role played by $u$ and $v$. This gives
	\begin{align*}
		&\int_{[0,T]\times \R^d\times \R^d} \varphi(T,x, s, y) |u(T, x) - v(s, y) |\, dsdxdy +\int_{[0,T]\times (\R^d)^2} \varphi(t,x, T, y) |u(t,x) - v(T, y) |\, dtdxdy \\
		&- \int_{[0,T]\times \R^d\times \R^d} \varphi(0, x, s, y) |u_0 (x)  - v(s,y) |\, dsdxdy - \int_{[0,T]\times (\R^d)^2}\varphi(t,x, 0, y) |u(t,x) - v_0 (y) |\, dtdxdy \\
		&\le \int_{[0,T]\times [0,T]\times \R^d\times \R^d } \nab_x\varphi(t,x,s,y) \cdot \big( K \ast u(t,x) - K \ast v(s,y) \big) \\
		&\quad \times \sgn( u(t,x ) - v(s, y) ) \big( f\circ u(t,x) - f\circ v(s,y) \big) \, dtds dxdy \\
		&-  \int_{[0,T]\times [0,T]\times \R^d\times \R^d } \varphi(t,x,s,y) \big[f\circ v(s,y) \div K \ast u(t,x) - f\circ u(t,x) \div K \ast v(s,y) \big]\\
		&\quad \times \sgn (u(t,x) - v(s, y)  ) \, dtdsdxdy \\
		&- \D_{\d,\eta}(T),
	\end{align*}
	where
	\begin{multline*} 
		\D_{\d,\eta}(T) := -\int_{[0,T]\times (\R^d)^2} \varphi(t,x, T, y) |u(t,x) - v(T, y) |\, dtdxdy \\
		+ \int_{[0,T]\times (\R^d)^2}\varphi(t,x, 0, y) |u(t,x) - v_0 (y) |\, dtdxdy  
		+ \int_{[0,T]^2\times (\R^d)^2} \p_s \varphi(t,x,s,y ) |u(t,x ) - v(s,y) |\, dtdsdxdy \\
		+ \int_{[0,T]^2\times (\R^d)^2} \nab_y \varphi(t,x,s,y) \cdot K \ast v(s,y) \sgn(v(s,y) - u(t,x) ) (f\circ v(s,y) - f\circ u(t,x) ) \, dtdsdxdy \\
		- \int_{[0,T]^2\times (\R^d)^2} \varphi(t,x,s,y) f\circ u(t,x) \sgn( v(s,y) - u(t,x) ) \div K \ast v(s,y) \, dtdsdxdy.
	\end{multline*}
	Note that if $v$ is an entropy solution to \eqref{eq:PDE}, then $\D_{\d,\eta}(T) \ge 0$.
	We now use the triangle inequality as follows:
	\begin{align*}
	| u_0 (x) - v(s,y) | &\le |u_0(x) - v_0(x) | + |v_0 (x) - v_0(y) | + |v_0(y) - v(s,y) |, \\
	|u(T,x) - v(s,y) | &\ge |u(T, x) - v(T,x) | - | v(T, x) - v(T, y) | - |v(T, y) - v(s,y) |,
	\end{align*}
	and similarily for the expressions where the roles of $u$ and $v$ are interchanged.	This gives
	\begin{align}\label{tmp:rate}
	&2\| u(T) - v(T) \|_{L^1} \nn\\
	 &\le 2\| u_0 - v_0 \|_{L^1} \nn \\
	&+ \int_{\R^d} \varphi_1^\d (x-y) \big[ |v(T, x) - v(T, y) | + |u(T,x) - u(T,y )| \big] \, dxdy \nn \\
	&+ \int_{\R} \varphi_2^\eta (T-s) \big[\|v(T ) -v(s) \|_{L^1} + \| u(T) - u(s) \|_{L^1} \big] \, ds \nn \\
	&+ \int_{\R^d}  \varphi_1^\d (x-y) \big[|v_0 (x) - v_0(y) | + |u_0 (x) - u_0(y) | \big] \, dxdy \nn \\
	&+ \int_{\R} \varphi_2^\eta (-s) \big[ \| v_0 - v(s) \|_{L^1} + \| u_0 - u(s) \|_{L^1} \big]\, ds \nn \\
	& +\int_{[0,T]\times [0,T]\times \R^d\times \R^d } \nab_x\varphi(t,x,s,y) \cdot \big( K \ast u(t,x) - K \ast v(s,y) \big) \nn \\
		&\quad \times \sgn( u(t,x ) - v(s, y) ) \big( f\circ u(t,x) - f\circ v(s,y) \big) \, dtds dxdy  \\
		&-  \int_{[0,T]\times [0,T]\times \R^d\times \R^d } \varphi(t,x,s,y) \big[f\circ v(s,y) \div K \ast u(t,x) - f\circ u(t,x) \div K \ast v(s,y) \big] \nn \\
		&\quad\times \sgn (u(t,x) - v(s, y)  ) \, dtdsdxdy \nn \\
		&- \D_{\d,\eta}(T). \nn
 	\end{align}
 	Let us consider the above terms separately. First, we have for BV functions,
 	\begin{align*}
 	&\int_{\R^d}  \varphi_1^\d (x-y) |v_0 (x) - v_0(y) |\, dxdy \le C \d | Dv_0 |(\R^d), \\
 	 & \int_{\R^d}  \varphi_1^\d (x-y) |v (T,x) - v(T,y) |\, dxdy \le C \d | Dv(T) |(\R^d),
 	\end{align*}
 	and similarily for $u$. 
 	Then, we use the time continuity of $u$ and $v$ in order to obtain
 	\begin{align*}
 		& \int_{\R} \varphi_2^\eta (-s) \| u_0 - u(s) \|_{L^1}\, ds \le C_1 {\eta} , \\
 		& \int_{\R} \varphi_2^\eta (-s) \| v_0 - v(s) \|_{L^1}\, ds \le C_2 \sqrt{\eta} ,
 	\end{align*}
 	where here $C_1>0$ depends on $\| u_0 \|_{L^\infty\cap BV}$ and similarly for $C_2$, involving propagated quantities for $v$. Similar bounds hold for the other time translation terms. Then, we rewrite
 	\begin{align*}
 	& \nab_x\varphi \cdot \big( K \ast u(t,x) - K\ast v(s,y) \big) \\
 	&= \div_x \big[ \varphi   \big( K \ast u(t,x) - K \ast u(t,y) \big) \big] - \varphi  \div K \ast u(t,x) \\
 	&+ \nab_x \varphi \cdot K \ast (u-v)(t,y) + \nab_x \varphi\cdot \big[ K \ast v(t,y ) - K \ast v(s,y) \big]
 	\end{align*}
 	Noticing that $K \ast u(t)$ is bounded and globally Lipschitz for a.e. $t>0$ (\cref{lem:regV}), we can use \cref{lem:divboundB} and \cref{lem:divboundA} to bound \eqref{tmp:rate} by
 	\begin{align*}
 		& C \d - \int_{[0,T]^2\times (\R^d)^2} \varphi(t,x,s,y) \div K \ast u(t,x) \sgn(u(t,x )-v(s,y) ) \big[ f\circ u(t,x) - f\circ v(s,y ) \big]\, dtdsdxdy\\
 		& + C \int_0^T \| K \ast (u-v) (t) \|_{L^\infty}\, dt + C\int_0^T \varphi_2^\eta (t-s) \| K \ast v(t) - K \ast v(s) \|_{L^\infty}\, dt.
 	\end{align*}
 	Finally using that $K \in L^\infty$, we have obtained
 	\begin{align*}
 		&2\| u(T) - v(T) \|_{L^1} \le 2\| u_0 - v_0 \|_{L^1} + C\d + C\sqrt{\eta} 
 		+ C\int_0^T \| u(t) - v(t) \|_{L^1}\, dt \\
 		&-  \int_{[0,T]^2\times (\R^d)^2}\varphi(t,x,s,y) f\circ u(t,x) \big[\div K \ast u(t,x) - \div K \ast v(s,y) \big]   \sgn (u(t,x) - v(s, y)  ) \, dtdsdxdy \\
		&- \D_{\d,\eta }(T).
 	\end{align*}
 	Notice that the terms involving no derivatives on $\varphi$ recombine in order to give the $(\d,\eta)$-approximation of the quantity
 	\begin{align*}
 		-\int_{[0,T]\times \R^d }f(u) \div K \ast (u-v) \sgn( u-v) \, dtdx.
 	\end{align*}
 	At this point, it is not clear if this approximation can be associated with a quantitative rate of convergence, since we are dealing with not so regular kernels. We proceed as with the flux term, rewriting
 	\begin{align*}
 		 \div K \ast u(t,x) - \div K \ast v(s,y )  &=   \div K \ast (u-v) (t,x) \\
 		 &+ \div K \ast v(t,x)- \div K \ast v(s,x) \\
 		 &+ \div K \ast v(s,x) - \div K \ast v(s,y) .
 	\end{align*}
 	Now, $\div K \ast v$ inherits the time continuity and space regularity from $v$, since $\div K $ is a Radon measure. We obtain in the end
 	\begin{align*}
 		\| u(T) - v(T) \|_{L^1} \le \| u_0 - v_0 \|_{L^1} + C\d + C \sqrt{\eta} + C\int_0^T \| u(t) - v(t) \|_{L^1}\, dt - \D_{\d,\eta}(T).
 	\end{align*}
 	We conclude by applying Gr\"onwall's lemma.
\end{proof}

\section{Application to the one-dimensional CGV and hyperbolic KS models}

As a direct consequence from our analysis, we provide the uniqueness of entropy solutions to the one-dimensional hyperbolic Keller-Segel model, and answer part of a question asked by Carrillo et al. in \cite{Carrillo2022VortexFormationSuperlinear}. 

\subsection{The hyperbolic Keller--Segel model}

We consider the model
\begin{equation}\label{eq:HKS}
\begin{cases}
\p_t u + \div (u(1-u) \nab S) = 0, & t>0, \, x\in \T^d, \\
-\D S + S = u, \\
u\vert_{t=0} = u_0 \in L^\infty\cap BV(\T^d), & 0 \le u_0 \le 1.
\end{cases}
\end{equation}
posed on the $d$-dimensional torus $\T^d$, which can be identified with $[-\hal, \hal]^d$ with periodic boundary conditions. 

For general dimensions $d\ge 1$, such a singular kernel $S$ does not allow neither for the propagation of BV norms, nor for an $L^1$ stability estimate. Nevertheless, entropy solutions can be constructed using e.g. the kinetic formulation \cite{PerthameDalibard2009HyperbolicKS}. 

When $d=1$, we have $\nab S\in L^\infty\cap BV$. Therefore, we are exactly in the framework of our article, and we can state without proof the following:
\begin{cor}
Let $d=1$. There exists a unique entropy solution to \eqref{eq:HKS}.
\end{cor}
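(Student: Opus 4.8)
The plan is to recognize \eqref{eq:HKS} as a special case of \eqref{eq:PDE} and then to invoke the existence and uniqueness theorem directly. First I would eliminate the chemoattractant $S$ by solving the elliptic equation: on $\T^d$ the operator $-\Delta + 1$ is invertible, with a positive Green's function $G$ that is smooth away from the origin, so that $S = G \ast u$ and the drift becomes a genuine convolution,
\[
u(1-u)\,\nab S \;=\; f(u)\,\big(K \ast u\big), \qquad f(\xi) := \xi(1-\xi), \quad K := \nab G.
\]
This is exactly the structure of \eqref{eq:PDE}. The mobility $f$ is a polynomial, hence $f \in C^{1,1}_{loc}(\R)$ and $|f(\xi)| \le C(1 + |\xi|^2)$, so the standing Assumptions on $f$ hold with $\al = 2$. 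The point to emphasize is that the hypothesis to be checked concerns the \emph{kernel} $K = \nab G$ itself, and not the force field $\nab S = K \ast u$; the latter is automatically well-behaved by \cref{lem:regV} once $K \in L^\infty$ and $u \in BV$.

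When $d = 1$ this verification is immediate. The $1$-periodic Green's function of $-\partial_{xx} + 1$ is
\[
G(x) = \frac{\cosh\!\big(\tfrac12 - |x|\big)}{2\sinh(\tfrac12)}, \qquad |x| \le \tfrac12,
\]
so that $K(x) = G'(x) = -\tfrac12\,\sgn(x)\,\dfrac{\sinh(\tfrac12 - |x|)}{\sinh(\tfrac12)}$ satisfies $|K| \le \tfrac12$, is smooth on $\T \setminus \{0\}$, and has a single jump of size $1$ at the origin. Hence $K \in L^\infty \cap BV(\T)$, and $\div K = \Delta G = G - \d_0$ is a finite measure. All the structural hypotheses of the article are therefore met.

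With these verifications, the conclusion follows by transcribing Sections~2 and~3 to the torus. Every ingredient of the construction and of the uniqueness proof---the heat-kernel fixed point of the local well-posedness proposition, the mass, $L^1$, $L^\infty$ and $BV$ estimates of \cref{prop:BV}, the time-continuity of \cref{prop:timecontinuity}, the inequalities \cref{lem:divboundA} and \cref{lem:divboundB}, and the doubling of variables of \cref{prop:L1stability}---rests only on integration by parts over a boundaryless domain, on convolution against $K$, and on short-time smoothing of the viscous heat semigroup. All of these hold verbatim on the compact manifold $\T^d$, and in fact are slightly simpler there: $\T^d$ has finite measure, so $L^\infty \cap BV(\T^d) \subset L^1(\T^d)$ and no decay at infinity is ever invoked (note $u_0 \in L^\infty \cap BV(\T) \subset L^1(\T)$, as required). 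The vanishing viscosity scheme then produces a maximal entropy solution, and \cref{prop:L1stability} together with the uniqueness theorem gives its uniqueness.

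The one step that deserves genuine care---and what I expect to be the main obstacle---is precisely this passage from $\R^d$ to $\T^d$: one must confirm that the semigroup bound $\|e^{\ep(t-s)\Delta}\div(\cdot)\|_{L^p} \le C(\ep(t-s))^{-1/2}\|\cdot\|_{L^p}$ and the periodic integrations by parts underlying \cref{lem:divboundA}--\cref{lem:divboundB} are valid on the torus. Both are classical and introduce no new difficulty. Finally I would observe that, since $f(0) = f(1) = 0$, the constants $0$ and $1$ solve the viscous problem \eqref{eq:viscousPDE}; a comparison principle for this uniformly parabolic quasilinear equation propagates $0 \le u_0 \le 1$ to $0 \le u_\ep \le 1$, yielding a uniform $L^\infty$ bound and hence $T_{max} = +\infty$, so the unique entropy solution is in fact global.
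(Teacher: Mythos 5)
Your proposal is correct and takes essentially the same route as the paper: the paper's entire justification of this corollary is the one-line observation that for $d=1$ the kernel $K=\nabla G$ lies in $L^\infty\cap BV(\T)$, so the model falls into the framework of the main existence and uniqueness theorems, which is exactly your verification (made concrete by the explicit Green's function computation). Your additional points --- the transfer of the $\R^d$ arguments to the compact torus, and the comparison-principle bound $0\le u_\ep\le 1$ yielding globality --- go beyond what the paper records (it states the corollary ``without proof''), but they are sound and only reinforce the same argument.
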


\subsection{The Carrillo--G\'omez-Castro--V\'azquez model}

We consider the model
\begin{equation}\label{eq:CGV}
\begin{cases}
\p_t u - \div (u^m \nab v) = 0, & t>0, \, x\in \T^d, \\
\displaystyle-\D v = u-\int_{\T^d} u\, dx, \\
u\vert_{t=0} = u_0 \in L^\infty\cap BV(\T^d). 
\end{cases}
\end{equation}
This model has been studied on the Euclidean space in \cite{Carrillo2022FastRegularisation} when $0<m<1$, and \cite{Carrillo2022VortexFormationSuperlinear} when $m>1$. The special case $m=1$ was already known as a model for vortices in type-II supraconductors and superfluidity \cite{E1994, CRS1996, LinZhang2000}.

In \cite{Carrillo2022FastRegularisation, Carrillo2022VortexFormationSuperlinear}, the authors restrict themselves to either $d=1$ or radial solutions. In this case, the conservation law \eqref{eq:CGV} can be seen as the derivative equation of an associated Hamilton-Jacobi equation, for which a comparison principle holds. This strategy allows to study a Cauchy problem that is simpler than the original one, eventually proving well-posedness of the Hamilton-Jacobi equation. 

However, the nonradial theory remains a challenge, with entropy solutions constructed in \cite{Courcel2025RepulsionModel} (without uniqueness). Another open problem raised in \cite{Carrillo2022VortexFormationSuperlinear} is to have a uniqueness result stated in terms of \eqref{eq:CGV}, and not the Hamilton-Jacobi equation. Going back to the $d=1$ framework, our article gives a partial answer to this issue. 

\begin{cor}
Let $d=1$, $m>0$, and $u_0 >0$. There exists a unique entropy solution to \eqref{eq:CGV}. 
\end{cor}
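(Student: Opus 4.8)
The plan is to realize \eqref{eq:CGV} (with $d=1$) as a particular case of \eqref{eq:PDE} and then invoke our main well-posedness theorem; the only genuine work is to produce a bounded $BV$ kernel and to rule out the degeneracy of $u^m$ at the origin by propagating a positive lower bound. First I would rewrite the drift. Mass is conserved, so the mean $\bar u := \int_{\T^1} u_0$ is constant in time, and on $\T^1 \cong [-\tfrac12,\tfrac12]$ the Poisson problem $-\Delta v = u - \bar u$ is solved by $v = G \ast (u-\bar u)$, where $G$ is the zero-mean Green's function determined by $-G'' = \delta_0 - 1$. Differentiating and using $\int_{\T^1} G' = 0$ to absorb the mean, one gets $\nab v = K \ast u$ with $K := G'$. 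Since $K' = G'' = 1 - \delta_0$, the kernel $K$ is piecewise affine with a single unit jump at the origin; in particular $K \in L^\infty \cap BV(\T^1,\R)$. Thus \eqref{eq:CGV} is \eqref{eq:PDE} with force field $K \ast u$ and mobility $f(u) = -u^m$, and the entire apparatus of \cref{prop:BV}, \cref{prop:L1stability} and the main theorem transfers verbatim to the periodic setting, all estimates being translation invariant and compactness being only easier on the compact torus.

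The obstruction is that $f(u) = -u^m$ fails to be $C^{1,1}_{loc}$ at $0$ when $m \in (0,2)$, so the standing assumptions are not literally met. This is where $u_0 > 0$ enters: denoting by $m_0 > 0$ the essential infimum and by $M_0$ the essential supremum of $u_0$, I would show that $[m_0, M_0]$ is an invariant region for the viscous flow \eqref{eq:viscousPDE}. Indeed, for the smooth approximation $u_\ep$, at a spatial minimum $x_0$ of $u_\ep(t,\cdot)$ one has $\nab u_\ep(x_0) = 0$ and $\Delta u_\ep(x_0) \ge 0$, whence, using $\Delta v_\ep = \bar u - u_\ep$,
\[
\p_t u_\ep(x_0) = u_\ep(x_0)^m\big(\bar u - u_\ep(x_0)\big) + \ep\,\Delta u_\ep(x_0).
\]
When $u_\ep(x_0) = m_0$, the spatial minimum lies below the mean so $\bar u - u_\ep(x_0) \ge 0$, while $u_\ep(x_0)^m > 0$; hence the right-hand side is $\ge 0$ and the minimum cannot drop below $m_0$. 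The symmetric computation at a spatial maximum gives $u_\ep \le M_0$. These bounds are uniform in $\ep$ and survive the $L^1$ vanishing-viscosity limit, so the constructed entropy solution satisfies $m_0 \le u \le M_0$ almost everywhere.

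With the two-sided bound in hand I would fix any $\tilde f \in C^{1,1}_{loc}(\R)$ coinciding with $-\xi^m$ on $[m_0, M_0]$ (possible because $\xi \mapsto \xi^m$ is smooth there, $m_0 > 0$) and satisfying the growth bound of the standing assumptions. Our main theorem then applies to $\p_t u + \div(\tilde f(u)\, K \ast u) = 0$ and yields a unique maximal entropy solution from $u_0$; by the invariant-region argument this solution takes values in $[m_0,M_0]$, where $\tilde f(u) = -u^m$, so it is exactly an entropy solution of \eqref{eq:CGV}. Conversely, any entropy solution of \eqref{eq:CGV} stays in $[m_0, M_0]$ by the same principle and hence solves the $\tilde f$-problem, so uniqueness is inherited. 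The main obstacle is precisely this lower bound: one must make the minimum-principle computation rigorous (handling the non-differentiability of $t \mapsto \min_x u_\ep$ via difference quotients, or invoking the parabolic weak minimum principle on \eqref{eq:viscousPDE}) and verify its stability under the inviscid limit, since it is what legitimizes replacing the degenerate nonlinearity $u^m$ by a globally $C^{1,1}_{loc}$ flux.
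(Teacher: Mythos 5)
The paper states this corollary \emph{without proof}; its implicit argument is exactly your first paragraph: on $\T^1$ one has $\nab v = G'\ast u$ with $-G''=\delta_0-1$, so the kernel $K=G'$ is piecewise affine with one jump, hence in $L^\infty\cap BV$, and the main theorems apply (modulo the routine transfer from $\R^d$ to $\T^d$, which the paper itself takes for granted). The paper never explains how the hypothesis $u_0>0$ is used; your truncation of $-\xi^m$ to a $C^{1,1}_{loc}$ flux $\tilde f$ together with the invariant-region computation for the viscous flow (where the key structural fact is that $\div K\ast u_\ep=\bar u-u_\ep$ localizes the nonlocal term, and the spatial minimum always sits below the mean) is correct and supplies the missing link. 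Note only that your reading of ``$u_0>0$'' as $m_0=\essinf u_0>0$ is the one that makes the argument work; $u_0>0$ a.e.\ alone would not suffice. With this, the \emph{existence} half of the corollary is complete: the paper's entropy solution for the $\tilde f$-problem is a vanishing-viscosity limit (\cref{cor:identityentropyviscosity}), so it inherits the bound $m_0\le u\le M_0$ and solves \eqref{eq:CGV}.

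The gap is in the \emph{uniqueness} half. You assert that ``any entropy solution of \eqref{eq:CGV} stays in $[m_0,M_0]$ by the same principle,'' but the principle you proved is a minimum principle at spatial extrema of the \emph{smooth viscous} solution; an arbitrary entropy solution of \eqref{eq:CGV} is merely $BV$, may contain shocks, and is not known a priori to be a viscous limit — that is precisely what is at stake. As written, your argument only yields uniqueness within the subclass of entropy solutions valued in $[m_0,M_0]$, which is weaker than the statement. The fix requires a different mechanism, namely an invariant-region argument run directly on the entropy inequalities of \cref{def:entropysol}: taking $C^2$ convex entropies approximating $\eta(\xi)=(\xi-M_0)_+$ (the same regularization the paper uses to reach Kr\"uzhkov entropies in \cref{prop:L1stability}), one gets $q(\xi)=\mathbf{1}_{\xi>M_0}\big(f(\xi)-f(M_0)\big)$ up to an irrelevant constant, hence $\eta'(\xi)f(\xi)-q(\xi)=\mathbf{1}_{\xi>M_0}f(M_0)$, and integrating the entropy inequality over $\T^1$ (the divergence term vanishes, and $\div K\ast u=\bar u-u$ by mass conservation),
\begin{align*}
\frac{d}{dt}\int_{\T^1} (u - M_0)_+ \, dx
&\le -\int_{\T^1} \mathbf{1}_{u > M_0}\, f(M_0)\, \big(\div K \ast u\big) \, dx
= \int_{\T^1} \mathbf{1}_{u > M_0}\, M_0^m\, (\bar u - u) \, dx \le 0,
\end{align*}
since $u>M_0\ge \bar u$ on the domain of integration; the symmetric computation with $\eta(\xi)=(m_0-\xi)_+$ gives $u\ge m_0$. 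This shows every entropy solution of \eqref{eq:CGV} takes values in $[m_0,M_0]$, hence is an entropy solution of the $\tilde f$-problem, and the paper's $L^1$ stability then closes your argument.
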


\printbibliography
\nocite{*}

\end{document}